\newtheorem{theorem}{Theorem}
\newtheorem{lemma}{Lemma}
\newtheorem{corollary}{Corollary}
\newcommand{\X}{\mathcal{X}}
\newcommand{\Y}{\mathcal{Y}}
\newcommand{\cU}{\mathcal{U}}
\newcommand{\R}{\mathbb{R}}
\newcommand{\ol}{\overline{\lambda}}
\newcommand{\la}{\lambda}
\newcommand{\La}{\Lambda}
\newcommand{\Lab}{\overline{\Lambda}}
\newcommand{\BIGOP}[1]{\mathop{\mathchoice%
{\raise-0.22em\hbox{\huge $#1$}}%
{\raise-0.05em\hbox{\Large $#1$}}{\hbox{\large $#1$}}{#1}}}
\begin{document}

\title{Compromise Solutions for Robust Combinatorial Optimization with Variable-Sized Uncertainty}

\author[1]{Andr\'{e} Chassein\thanks{Email: chassein@mathematik.uni-kl.de}}
\author[2]{Marc Goerigk\thanks{Corresponding author. Email: m.goerigk@lancaster.ac.uk}}

\date{}
\affil[1]{Fachbereich Mathematik, Technische Universit\"at Kaiserslautern, Germany}
\affil[2]{Department of Management Science, Lancaster University, United Kingdom}

\maketitle

\begin{abstract}
In classic robust optimization, it is assumed that a set of possible parameter realizations, the uncertainty set, is modeled in a previous step and part of the input. As recent work has shown, finding the most suitable uncertainty set is in itself already a difficult task. We consider robust problems where the uncertainty set is not completely defined. Only the shape is known, but not its size. Such a setting is known as variable-sized uncertainty.

In this work we present an approach how to find a single robust solution, that performs well on average over all possible uncertainty set sizes. We demonstrate that this approach can be solved efficiently for min-max robust optimization, but is more involved in the case of min-max regret, where positive and negative complexity results for the selection problem, the minimum spanning tree problem, and the shortest path problem are provided. We introduce an iterative solution procedure, and evaluate its performance in an experimental comparison.

\end{abstract}

{\bf Keywords:} robust combinatorial optimization; min-max regret; variable-sized uncertainty

\section{Introduction}

Classic optimization settings assume that the problem data are known exactly. Robust optimization, like stochastic optimization, instead assumes some degree of uncertainty in the problem formulation. Most approaches in robust optimization formalize this uncertainty by assuming that all uncertain parameters $\xi$ are described by a set of possible outcomes $\cU$, the uncertainty set.

For general overviews on robust optimization, we refer to \cite{Aissi2009,RObook,GoeSchoe13-AE,bertsimas-survey}.

While the discussion of properties of the robust problem for different types of uncertainty sets $\cU$ has always played a major role in the research community, only recently the data-driven design of useful sets $\cU$ has become a focus of research. In \cite{bertsimas2013data}, the authors discuss the design of $\cU$ taking problem tractability and probabilistic guarantees of feasibility into account. The paper \cite{bertsimas2009constructing} discusses the relationship between risk measures and uncertainty sets.

In distributionally robust optimization, one assumes that a probability distribution on the data is roughly known; however, this distribution itself is subject to an uncertainty set $\cU$ of possible outcomes (see \cite{goh2010distributionally,wiesemann2014distributionally}).

Another related approach is the globalized robust counterpart, see \cite{RObook}. The idea of this approach is that a relaxed feasibility should be maintained, even if a scenario occurs that is not specified in the uncertainty set. The larger the distance of $\xi$ to $\cU$, the further relaxed becomes the feasibility requirement of the robust solution.

\smallskip

In this work we present an alternative to constructing a specific uncertainty set $\cU$. Instead, we only assume knowledge of a nominal (undisturbed) scenario, and consider a set of possible uncertainty sets of varying size based on this scenario. That is, a decision maker does not need to determine the size of uncertainty (a task that is usually outside his expertise). Our approach constructs a solution for which the worst-case objective with respect to any possible uncertainty set performs well on average over all uncertainty sizes.

The basic idea of variable-sized uncertainty was recently introduced in \cite{variable}. There, the aim is to construct a set of robust candidate solutions that requires the decision maker to chose one that suits him best. In our setting, we consider all uncertainty sizes simultaneously, and generate a single solution as a compromise approach to the unknown uncertainty. We call this setting the \textit{compromise approach to variable-sized uncertainty}.

We focus on combinatorial optimization problems with uncertainty in the objective function, and consider both min-max and min-max regret robustness (see \cite{kasperski2016robust}).

This work is structured as follows. In Section~\ref{sec:var}, we briefly formalize the setting of variable-sized uncertainty. We then introduce our new compromise approach for min-max robustness in Section~\ref{sec:comp}, and for the more involved case of min-max regret robustness in Section~\ref{sec:comp2}. We present complexity results for the selection problem, the minimum spanning tree problem, and the shortest path problem in Section~\ref{sec:probs}. In Section~\ref{sec:exp}, we evaluate our approach in a computation experiment, before concluding this paper in Section~\ref{sec:conc}.


\section{Variable-Sized Uncertainty}\label{sec:var}

We briefly summarize the setting of \cite{variable}. Consider an uncertain combinatorial problem of the form
\[ \min\ \{ c^t x: x\in\X \} \tag{P($c$)} \]
with $\X\subseteq\{0,1\}^n$, and 
an uncertainty set $\cU(\la)\subseteq\R^n_+$ that is parameterized by some size $\la \in \La$. For example,
\begin{itemize}
\item interval-based uncertainty  $\cU(\la) = \prod_{i\in[n]} [(1-\la)\hat{c}_i,(1+\la)\hat{c}_i]$ with $\La \subseteq [0,1]$, or
\item ellipsoidal uncertainty $\cU(\la) = \{ c : c = \hat{c} + C\xi, \Vert \xi \Vert_2 \le \la \}$ with $\La \subseteq \mathbb{R}_+$.
\end{itemize}
We call $\hat{c}$ the \textit{nominal scenario}, and any $\hat{x}\in\X$ that is a minimizer of P($\hat{c}$) a \textit{nominal solution}.

In variable-sized uncertainty, we want to find a set of solutions $\mathcal{S}\subseteq \X$ that contains an optimal solution to each robust problems over all $\la$. Here, the robust problem is either given by the min-max counterpart
\[ \min_{x\in\X} \max_{c\in\cU(\la)} c^t x \]
or the min-max regret counterpart
\[ \min_{x\in\X} \max_{c\in\cU(\la)} \left( c^t x - \min_{y\in\X} c^ty \right). \]
In the case of min-max robustness, such a set can be found through methods from multi-objective optimization in $\mathcal{O}(|\mathcal{S}|\cdot T)$, where $T$ denotes the complexity of the nominal problem, for many reasonable uncertainty sets. However, $\mathcal{S}$ may be exponentially large. Furthermore, in some settings, a set of solutions that would require a decision maker to make the final choice may not be desirable, but instead a single solution that represents a good trade-off over all values for $\la$ is sought.

\section{Compromise Solutions in the Min-Max Model}\label{sec:comp}

In this paper we are interested in finding one single solution that performs well over all possible uncertainty sizes $\la\in \La$. To this end, we consider the problem
\[ \min_{x\in\X} val(x) \hspace{5mm} \text{with} \hspace{5mm} val(x) = \int_\La w(\la) \left( \max_{c\in\cU(\la)} c^tx \right) d\la \tag{C} \]
for some weight function $w: \La \to \mathbb{R}_+$, such that $val(x)$ is well-defined. We call this problem the compromise approach to variable-sized uncertainty. The weight function $w$ can be used to include decision maker preferences; e.g., it is possible to give more weight to smaller disturbances and less to large disturbances. If a probability distribution over the uncertainty size were known, it could be used to determine $w$. In the following, we consider (C) for different shapes of $\cU(\la)$.

\begin{theorem}\label{th1}
Let $\cU(\la) = \prod_{i\in[n]} [(1-\la)\hat{c}_i,(1+\la)\hat{c}_i]$ be an interval-based uncertainty set with $\la\in\La\subseteq[0,1]$. Then, a nominal solution $\hat{x}$ is an optimal solution of (C).
\end{theorem}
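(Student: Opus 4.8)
The plan is to exploit the box structure of $\cU(\la)$ to evaluate the inner maximization in closed form, and then observe that the resulting objective is a nonnegative multiple of the nominal objective $\hat{c}^t x$, so that a minimizer of $\hat{c}^t x$ is automatically a minimizer of $val$.

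First I would fix an arbitrary $x \in \X \subseteq \{0,1\}^n$ and evaluate $\max_{c\in\cU(\la)} c^t x$. Since $\cU(\la)$ is a product of intervals, the coordinates of $c$ may be chosen independently, and since $x_i \in \{0,1\}$ and $\hat{c}_i \ge 0$ (the uncertainty set lies in $\R^n_+$), the maximum is attained by choosing $c_i = (1+\la)\hat{c}_i$ for every $i$ with $x_i = 1$, the remaining coordinates being irrelevant. Hence $\max_{c\in\cU(\la)} c^t x = \sum_{i\in[n]} (1+\la)\hat{c}_i x_i = (1+\la)\,\hat{c}^t x$. Substituting this into the definition of $val$ and pulling the $x$-independent factor out of the integral gives
\[
val(x) = \int_\La w(\la)\,(1+\la)\,\hat{c}^t x\, d\la = \left( \int_\La w(\la)(1+\la)\, d\la \right) \hat{c}^t x =: W \cdot \hat{c}^t x .
\]
Because $w(\la)\ge 0$ and $1+\la \ge 0$ for all $\la\in\La\subseteq[0,1]$, the constant $W$ is nonnegative (and finite, since $val$ is assumed well-defined). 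Thus for $W>0$ the problem (C) has the same set of optimal solutions as P($\hat{c}$), so any nominal solution $\hat{x}$ is optimal for (C); in the degenerate case $W=0$ we have $val\equiv 0$ and every feasible solution, in particular $\hat{x}$, is trivially optimal.

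There is no genuinely hard step here. The only point that needs (minor) care is the justification of the closed form for the inner maximum, which relies jointly on the nonnegativity of $\hat{c}$ and on $x$ being binary, together with noting the sign of $W$ so that the direction of optimization is preserved when the integral factor is removed.
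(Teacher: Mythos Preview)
Your proposal is correct and follows essentially the same route as the paper: compute the inner maximum as $(1+\la)\hat{c}^t x$, pull the $x$-independent integral factor out, and conclude that minimizing $val$ reduces to minimizing $\hat{c}^t x$. Your version is in fact slightly more careful than the paper's, as you explicitly justify the closed form for the inner maximum, integrate over the general set $\La$ rather than $[0,1]$, and handle the degenerate case $W=0$.
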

\begin{proof}
As $\max_{c\in\cU(\la)} c^tx = (1+\la)c^t x$, we get
\begin{align*}
val(x) &= \int_0^1 w(\la) \left( (1+\la) \hat{c}^tx \right) d\la \\
&=  \left( \int_0^1 (1+\la) w(\la) d\la\right)  \hat{c}^tx
\end{align*}
Therefore, a minimizer of the nominal problem with costs $\hat{c}$ is also a minimizer of (C).
\end{proof}

\begin{lemma}\label{lem1}
For an ellipsoidal uncertainty set $\cU(\la) = \{\hat{c} + C\xi : \Vert \xi \Vert_2 \le \la \}$ with $\la\in\mathbb{R}_+$, it holds that
\[ \max_{c\in\cU(\la)} c^tx = \hat{c}^t x + \la \Vert C^t x\Vert_2 \]
\end{lemma}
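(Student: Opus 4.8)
The plan is to maximize the linear functional $c \mapsto c^t x$ over the ellipsoid $\cU(\la) = \{\hat c + C\xi : \Vert \xi \Vert_2 \le \la\}$ directly. First I would substitute the parameterization: for any $c \in \cU(\la)$ we have $c = \hat c + C\xi$ for some $\xi$ with $\Vert \xi \Vert_2 \le \la$, so $c^t x = \hat c^t x + \xi^t C^t x = \hat c^t x + \xi^t (C^t x)$. Since $\hat c^t x$ is a constant independent of $\xi$, maximizing $c^t x$ over $\cU(\la)$ reduces to maximizing the inner product $\xi^t (C^t x)$ over the Euclidean ball $\{\xi : \Vert \xi \Vert_2 \le \la\}$.

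Next I would invoke the Cauchy--Schwarz inequality: for any $\xi$ with $\Vert \xi \Vert_2 \le \la$, we have $\xi^t (C^t x) \le \Vert \xi \Vert_2 \, \Vert C^t x \Vert_2 \le \la \Vert C^t x \Vert_2$. To see that this bound is attained (so that the maximum, not merely a supremum, equals $\la \Vert C^t x \Vert_2$), I would exhibit the maximizer explicitly: if $C^t x \neq 0$, take $\xi^* = \la \, C^t x / \Vert C^t x \Vert_2$, which satisfies $\Vert \xi^* \Vert_2 = \la$ and gives $(\xi^*)^t (C^t x) = \la \Vert C^t x \Vert_2$; if $C^t x = 0$, both sides are trivially zero. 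Combining, $\max_{c \in \cU(\la)} c^t x = \hat c^t x + \la \Vert C^t x \Vert_2$, as claimed.

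There is essentially no hard part here --- this is the standard computation of the support function of a Euclidean ball, and the main thing to be careful about is handling the degenerate case $C^t x = 0$ so that the formula (with the convention $0 \cdot$ anything $= 0$) remains valid, and noting that the ball is compact so the maximum is attained. One could alternatively phrase the argument via Lagrangian duality or the known fact that the support function of $\{\xi : \Vert\xi\Vert_2 \le \la\}$ evaluated at a vector $d$ is $\la \Vert d\Vert_2$, applied with $d = C^t x$, but the direct Cauchy--Schwarz argument is cleanest and self-contained.
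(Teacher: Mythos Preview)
Your argument is correct and is exactly the standard derivation: parameterize, reduce to maximizing a linear functional over a Euclidean ball, and apply Cauchy--Schwarz with an explicit maximizer. The paper itself does not spell this out --- its proof consists of citing Ben-Tal and Nemirovski (1999) for the case $\lambda=1$ and remarking that the general $\lambda$ case is analogous --- so your write-up is more self-contained than the paper's, but the underlying mathematics is the same.
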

\begin{proof}
This result has been shown in \cite{ben1999robust} for $\la = 1$. The proof holds analogously.
\end{proof}

\begin{theorem}\label{th2}
Let $\cU(\la) = \{ \hat{c} + C\xi : \Vert \xi \Vert_2 \le \la \}$ be an ellipsoidal uncertainty set with $\la\in\La\subseteq\mathbb{R}_+$. Then, an optimal solution to (C) can be found by solving a single robust problem with ellipsoidal uncertainty.
\end{theorem}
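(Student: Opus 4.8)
The plan is to mimic the argument of Theorem~\ref{th1}, using Lemma~\ref{lem1} in place of the elementary identity for interval uncertainty. First I would substitute the closed form $\max_{c\in\cU(\la)}c^tx = \hat{c}^t x + \la\Vert C^t x\Vert_2$ from Lemma~\ref{lem1} into the definition of $val$, giving
\[ val(x) = \int_\La w(\la)\bigl(\hat{c}^t x + \la \Vert C^t x\Vert_2\bigr)\,d\la. \]
Since neither $\hat{c}^t x$ nor $\Vert C^t x\Vert_2$ depends on $\la$, linearity of the integral lets me pull them out, so that $val(x) = A\,\hat{c}^t x + B\,\Vert C^t x\Vert_2$, where $A := \int_\La w(\la)\,d\la$ and $B := \int_\La \la w(\la)\,d\la$. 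Both constants are finite — this is exactly the assumption that $val$ is well-defined — and nonnegative, since $w\ge 0$ and $\La\subseteq\R_+$.

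Next I would interpret $val$ as a scalar multiple of the worst-case value of a single ellipsoidal robust problem. If $A>0$, dividing by $A$ yields $val(x)/A = \hat{c}^t x + \la^\ast\Vert C^t x\Vert_2$ with $\la^\ast := B/A\ge 0$, which by Lemma~\ref{lem1} equals $\max_{c\in\cU(\la^\ast)}c^t x$. Hence minimizing $val$ over $\X$ is equivalent to solving the robust problem $\min_{x\in\X}\max_{c\in\cU(\la^\ast)}c^t x$, and any optimal solution of the latter is optimal for (C). (Equivalently, one may absorb $B/A$ into the matrix and read this as an ellipsoidal robust problem with shape matrix $(B/A)\,C$ and radius $1$.) The degenerate case $A=0$ I would treat separately: then $w=0$ almost everywhere on $\La$, hence $B=0$ and $val\equiv 0$, so every $x\in\X$ is optimal and the claim is vacuous.

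I do not anticipate a real difficulty here: the argument is simply an interchange of ``integrate over $\la$'' with ``evaluate the inner maximum'', made possible by the explicit formula of Lemma~\ref{lem1}. The only points deserving a remark are that the effective radius $\la^\ast = B/A$ need not lie in the original range $\La$ — which is harmless, since ellipsoidal robust counterparts are defined for every $\la\ge 0$ and the theorem only claims a reduction to \emph{some} such problem — and that one should record why $A$ and $B$ are finite, which is immediate from the standing well-definedness hypothesis on $val$.
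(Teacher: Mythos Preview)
Your proposal is correct and follows essentially the same approach as the paper: substitute Lemma~\ref{lem1}, factor out the two $\la$-independent terms, and recognize the result as a single ellipsoidal robust problem with effective radius $\la' = (\int_\La \la w(\la)\,d\la)/(\int_\La w(\la)\,d\la)$. If anything, your version is slightly more careful than the paper's, since you also dispose of the degenerate case $A=0$ and remark that $\la^\ast$ need not lie in $\La$.
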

\begin{proof}
Using Lemma~\ref{lem1}, we find
\begin{align*}
val(x) &= \int_\La w(\la) \Big( \hat{c}^tx + \la\Vert C^t x\Vert_2 \Big) d\la \\
&=  \left( \int_\La w(\la) d\la \right) \hat{c}^t x+ \left( \int_\La \la w(\la) d\la \right)\Vert C^t x\Vert_2 
\end{align*}
To find a minimizer of $val(x)$, we can therefore solve the robust counterpart of (P) using an uncertainty set $\cU(\la')$ with $\la' = (\int_\La \la w(\la) d\la)/(\int_\La w(\la) d\la)$.
\end{proof}
Note that $ (\int_0^1 \la w(\la) d\la)/(\int_0^1 w(\la) d\la) = \frac{1}{2}$, if $w(\la) = 1$, i.e., the compromise solution simply hedges against the average size of the uncertainty. In general, recall that this formula gives the centroid of the curve defined by $w$.

The results of Theorems~\ref{th1} and \ref{th2} show that compromise solutions are easy to compute, as the resulting problems have a simple structure. This is due to the linearity of the robust objective value in the uncertainty size $\la$. Such linearity does not exist for min-max regret, as is discussed in the following section.

\section{Compromise Solutions in the Min-Max Regret Model}\label{sec:comp2}

We now consider the compromise approach in the min-max regret setting. In classic min-max regret, one considers the problem
\[ \min_{x\in\X} \max_{c\in\cU(\la)} c^t x - opt(c) \]
with $opt(c) = \min_{y\in\X} c^ty$. In the following, we restrict the analysis to the better-researched interval uncertainty sets $\cU(\la) = \prod_{i\in[n]} [(1-\la)\hat{c}_i,(1+\la)\hat{c}_i]$. Ellipsoidal uncertainty have been introduced in min-max regret only recently (see \cite{chassein2016min}).

\noindent The compromise approach to variable-sized uncertainty becomes
\[ \min val(x)\hspace{5mm} \text{with} \hspace{5mm} val(x) = \int_\La w(\la) \left( \max_{c\in\cU(\la)} c^tx - opt(c) \right) d\la \]
To simplify the presentation, we assume $\La = [0,1]$ and $w(\la) = 1$ for all $\la\in\La$ in the following. All results can be directly extended to piecewise linear functions $w$ with polynomially many changepoints.

\subsection{Structure of the Objective Function}

We first discuss the objective function $val(x)$ for some fixed $x\in\X$.
Note that
\[ reg(x,\la) := \max_{c\in\cU(\la)} c^tx - opt(c) = \max_{y\in\X} (1+\la)\hat{c}^tx - \sum_{i\in[n]} \hat{c}_i (1-\la + 2\la x_i) y_i. \]
Hence, $reg(x,\la)$ is a piecewise linear function in $\la$, where every possible regret solution $y$ defines an affine linear regret function $c^t(x,\la)(x-y)$, with
\[ c_i(x,\la) = \begin{cases} 
(1+\la)\hat{c}_i & \text{ if } x_i = 1 \\
(1-\la)\hat{c}_i & \text{ if } x_i = 0
\end{cases}.\]

\begin{figure}[htbp]
\begin{center}
\includegraphics[width=0.6\textwidth]{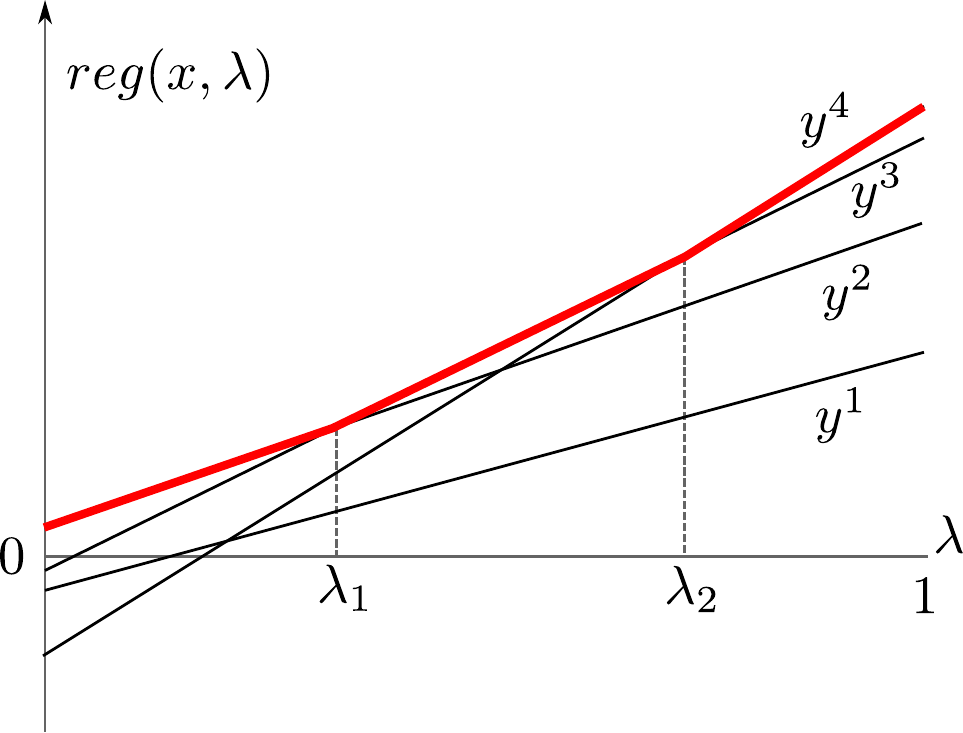}
\caption{Illustration of structure of $val(x)$.}\label{fig1}
\end{center}
\end{figure}

Figure~\ref{fig1} illustrates the objective function. In red is the maximum over all regret functions, which defines $val(x)$. On the interval $[0,\la_1]$, the regret of some solution $x$ is defined through $y^2$, while solution $y^3$ defines the regret on $[\la_1,\la_2]$, and $y^4$ defines the regret on $[\la_2,1]$. In this case, we can hence compute
\begin{align*}
val(x) &= \int_0^1 reg(x,\la) d\la = 
\int_0^1 \left( \max_{c\in\cU(\la)} c^tx - \min_{y\in\X} c^t y \right) d\la \\
&= \int_0^{\la_1} \max_{c\in\cU(\la)} c^tx - c^t y^2 d\la + \int_{\la_1}^{\la_2} \max_{c\in\cU(\la)} c^tx - c^t y^3 d\la \\
& \hspace{1cm} + \int_{\la_2}^1 \max_{c\in\cU(\la)} c^tx - c^t y^4 d\la \\
&=  \int_0^{\la_1} c^t(x,\la) (x- y^2) d\la + \int_{\la_1}^{\la_2} c^t(x,\la)(x -  y^3) d\la + \int_{\la_2}^1 c^t(x,\la)(x - y^4) d\la \\
&= \la_1 c^t(x,\frac{\la_1}{2}) (x-y^2) + (\la_2-\la_1) c^t(x,\frac{\la_1+\la_2}{2}) (x-y^3) \\
&\hspace{1cm} + (1-\la_2) c^t(x,\frac{\la_2+1}{2}) (x-y^4)
\end{align*}
In general, to compute $val(x)$, we need to determine all relevant regret solutions $y$, and the intersections of the resulting regret functions.

\subsection{Problem Formulation}

Let $\Lab(x)\subseteq\La$ be the set of changepoints of the piecewise linear function $reg(x,\cdot)$. To formulate problem (C) as a linear integer program, we use a set $\Lab \supseteq \cup_{x\in\X} \Lab(x)$. As $\X$ is a finite set, there always exists a set $\Lab$ that is finite. In general, it may contain exponentially many elements.

For the ease of notation, we assume $\Lab=\{\la_1,\ldots,\la_K\}$ with $\la_i \le \la_{i+1}$ and $\la_{K+1} := 1$. As a first approach, we model $val(x)$ by using all possible regret solutions $y\in\X$.
\begin{align}
\min\ & \sum_{\la_j \in\Lab} (\la_{j+1} - \la_j) z_j \label{p1}\\
\text{s.t. } & z_j \ge \sum_{i\in[n]} (1+\ol_j)\hat{c}_i x_i - \sum_{i\in[n]} (1-\ol_j + 2\ol_jx_i)\hat{c}_i y^\ell_i & \forall \lambda_j \in\Lab, y^\ell\in\X \nonumber\\
& x\in\X \nonumber
\end{align}
where $\overline{\la}_{j} = \frac{1}{2}(\la_j + \la_{j+1})$. 

If (P) has a duality gap of zero, i.e., if solving the dual of the linear relaxation also gives an optimal solution to (P), this formulation can be simplified. Examples where this is the case include the shortest path problem, or the minimum spanning tree problem. Let us assume that the linear relaxation of $\X$ is given by 
\[ \overline{\X} =  \{x\in\R_+^n : Ax\ge b\} \]

For the regret problem $\min_{x\in\X} \{  c^t(x,\lambda)x - opt(c(x,\lambda)) \}$ we may then write the following equivalent problem (see \cite{Aissi2009}):
\[ \min \{ c^t(x,\lambda)x - b^tu : x\in\X, u\in\overline{\Y} \} \text{ with } \overline{\Y} = \{u\ge 0 : A^tu \le c(x,\lambda)\} \]
Using this reformulation, we find the following program for (C)
\begin{align}
\min\ &  \sum_{\la_j \in\Lab} (\la_{j+1} - \la_j) \Big( c^t(x,\ol_j)x - b^t u^j \Big) \label{p2}\\
\text{s.t. } & A^tu^j \le c(x,\ol_j) & \forall \lambda_j\in\Lab \nonumber\\
& x\in\X \nonumber\\
& u^j \in \overline{\Y} & \forall \la_j\in\Lab \nonumber
\end{align}
For binary variables $x$, the product $c^t(x,\ol_j)x$ can then be linearized.
If a set $\Lab$ can be found that is of polynomial size, this is a compact formulation. In general, constraints and variables can be added in an iterative algorithm that generates new candidate values for $\lambda$ in Problem~\eqref{p2}. If the zero duality gap assumption does not hold, we can use Formulation~\eqref{p1}, where both values for $\la$ and regret solutions $y$ need to be generated. This approach is explained in the following section.

\subsection{General Algorithm}\label{sec:algo}


In the following we describe how to compute the set $\Lab(x)$ of changepoints of $reg(x,\cdot)$.
This is then used to solve Formulation~\eqref{p2} in the case of a zero duality gap for (P) as described in Algorithm~\ref{alg:c}.

\begin{algorithm}[htb] 
\caption{Exact algorithm for (C)} \label{alg:c}
\begin{algorithmic}[1]
\Require{An instance of (C).}
\State $\Lab \leftarrow \{\frac{1}{2}\}$, $k\leftarrow 0$ \label{a1s1}
\State Solve Formulation~\eqref{p2} using $\Lab$. Let the solution be $x^k$, and the objective value $LB^k$.\label{algret}
\State Compute $val(x^k)$. Let the resulting changepoints be $\Lab(x^k)$, and the objective value $UB^k$. \label{a1s3}
\If{$UB^k = LB^k$}
\State \textbf{END}: $x^k$ is an optimal solution. \label{a1s5}
\Else
\State $\Lab \leftarrow \Lab \cup \Lab(x^k)$ \label{a1s7}
\State $k \leftarrow k+1$
\State Go to \ref{algret}
\EndIf
\end{algorithmic}
\end{algorithm}
The algorithm begins with a starting set $\Lab = \{\frac{1}{2}\}$ as a guess for relevant changepoints (any other set could be used here). Using the current set $\Lab$, it then solves Formulation~\eqref{p2}. As not all constraints of the problem are present, this is a problem relaxation. Accordingly, the objective value that is found is only a lower bound $LB^k$ on the true optimal objective value of the problem. To evaluate the resulting candidate solution $x^k$, we compute $val(x^k)$ in Step~\ref{a1s3}. The sub-algorithm for this purpose is explained below. As $x^k$ is a feasible solution to (C), $val(x^k)$ gives an upper bound $UB^k$ on the optimal objective value. Hence, if lower and upper bound coincide, an optimal solution has been found. Otherwise, we extend the set $\Lab$ in Step~\ref{a1s7} and repeat the procedure.

If (P) has a duality gap larger than zero, the same algorithm can be used with the slight adjustment that Problem~\eqref{p1} is solved in Step~\ref{algret}. To this end, also regret solutions $\Y(x^k)$ generated in the computation of $val(x^k)$ need to be collected.

We describe the procedure to compute $val(x)$ in Algorithm~\ref{alg:val}.
\begin{algorithm}[htb] 
\caption{Algorithm to compute $val(x)$} \label{alg:val}
\begin{algorithmic}[1]
\Require{An instance of (C), a fixed solution $x\in\X$.}
\State $\Lab(x) \leftarrow \{0,1\}$, $\Lab^{\text{new}}(x) \leftarrow \Lab(x)$
\State $\Y(x) \leftarrow \emptyset$
\ForAll{$\la \in \Lab^{\text{new}}(x)$} \label{j1}
\State Solve (P) with costs $c(x,\la)$. Let $y$ be the resulting solution.
\State $\Y(x) \leftarrow \Y(x) \cup \{y\}$
\State $\Lab^{\text{new}}(x) \leftarrow \Lab^{\text{new}}\setminus\{\la\}$
\EndFor
\State change $\leftarrow$ false
\ForAll{$y^i,y^j\in\Y(x)$}
\State Calculate $\lambda$ as the point where the affine linear regret functions defined by $y^i$ and $y^j$ intersect.
\If{$\lambda\not\in\Lab(x)$}
\State $\Lab(x) \leftarrow \Lab(x) \cup \{\la\}$
\State $\Lab^{\text{new}}(x) \leftarrow \Lab^{\text{new}}(x) \cup \{\la\}$
\State change $\leftarrow$ true
\EndIf
\EndFor
\If{change $=$ true}
\State Go to \ref{j1}
\EndIf
\State Reduce $\Lab(x)$ and $\Y(x)$ such that only changepoints and regret functions remain that define the maximum over all affine linear functions.
\State \Return $\Lab(x)$, $\Y(x)$
\end{algorithmic}
\end{algorithm}
We begin with only two regret functions, for the extreme points $\Lab(x) = \{0,1\}$. The resulting two regret functions will intersect at one new candidate changepoint $\lambda$. We find the regret solution $y$ maximizing the regret at this point by solving a problem of type (P). We then repeat this process by iteratively calculating the regret at all current intersection points. Note that if there exists any regret function that is larger than all current regret functions at some point $\lambda$, then it is also larger than all current functions at the intersection point between two of them. Hence, Algorithm~\ref{alg:val} finds all relevant changepoints $\la$. As it may also produce unnecessary candidates $\la$, we reduce the solution sets at the end to contain only those changepoints and regret functions that define the maximum.

%

\section{Min-Max Regret Compromise Solutions for Specific Problems}\label{sec:probs}

\subsection{Minimum Selection}

The minimum selection problem is given by
\[ \min\ \left\{ c^tx : \sum_{i\in[n]} x_i = p,\ x\in\{0,1\}^n\right\} \]
and has been frequently studied in the literature on min-max regret optimization (see \cite{kasperski2016robust}). The min-max regret problem can be solved in $\mathcal{O}(n \cdot\min\{n,n-p\})$ time, see \cite{conde2004improved}.
We show that also problem (C) can be solved in polynomial time.
\begin{theorem}
Let $\cU  = \prod_{i\in[n]} [(1-\la)\hat{c}_i,(1+\la)\hat{c}_i]$ for a fixed $\la$. Then $\hat{x}$ is an optimal solution to the min-max regret selection problem.
\end{theorem}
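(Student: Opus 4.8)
The plan is to obtain an explicit formula for the worst-case regret of an arbitrary feasible selection and then to check that the nominal selection minimizes this formula term by term. Order the items so that $\hat{c}_1 \le \hat{c}_2 \le \dots \le \hat{c}_n$, write $S = \{i : x_i = 1\}$ (so $|S| = p$), and note that $\hat{x}$ corresponds to $\hat{S} = \{1,\dots,p\}$. Specializing the expression for $reg(x,\la)$ from Section~\ref{sec:comp2} to $\X = \{y\in\{0,1\}^n : \sum_i y_i = p\}$, the worst scenario sets $c_i = (1+\la)\hat{c}_i$ for $i \in S$ and $c_i = (1-\la)\hat{c}_i$ for $i\notin S$, so that
\[ reg(x,\la) = (1+\la)\sum_{i\in S}\hat c_i \;-\; \min_{|T|=p}\Big((1+\la)\sum_{i\in T\cap S}\hat c_i + (1-\la)\sum_{i\in T\setminus S}\hat c_i\Big). \]

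First I would rewrite the bracketed term using $|S\setminus T| = |T\setminus S|$ for any $p$-set $T$, turning the expression into
\[ reg(x,\la) = \max_{0\le k\le \min\{p,\,n-p\}}\Big((1+\la)\,\sigma_k(S) - (1-\la)\,\tau_k(S)\Big), \]
where $\sigma_k(S)$ is the sum of the $k$ largest $\hat c_i$ with $i\in S$ and $\tau_k(S)$ is the sum of the $k$ smallest $\hat c_i$ with $i\notin S$: for a fixed exchange size $k$, the inner minimization is solved by dropping from $T$ the $k$ most expensive members of $S$ and adding the $k$ cheapest indices outside $S$, which is valid because $1-\la\ge 0$. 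The $k=0$ term is $0$, so $reg(\cdot,\la)\ge 0$ as expected.

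The crucial observation is that the nominal set is extremal for every fixed $k$: $\sigma_k(S)\ge\sigma_k(\hat S)$ and $\tau_k(S)\le\tau_k(\hat S)$ for every feasible $S$. For the first bound, if $i_1>\dots>i_k$ are the indices of the $k$ largest members of $S$, then $S$ has at least $p-j+1$ members with index at most $i_j$, forcing $i_j\ge p-j+1$ and hence $\hat c_{i_j}\ge\hat c_{p-j+1}$; summing over $j$ yields $\sigma_k(S)\ge\sum_{j=1}^k\hat c_{p-j+1}=\sigma_k(\hat S)$. The bound on $\tau_k$ is the mirror image, with the indices of the $k$ smallest members of $S^c$ bounded above by $p+j$. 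Since $1+\la\ge 0$ and $1-\la\ge 0$ for $\la\in[0,1]$, these monotonicities give $(1+\la)\sigma_k(S)-(1-\la)\tau_k(S)\ge(1+\la)\sigma_k(\hat S)-(1-\la)\tau_k(\hat S)$ for each $k$, hence $reg(x,\la)\ge reg(\hat x,\la)$ for all $x\in\X$, which proves the claim.

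I expect the only real work to be the derivation of the $\max_k$ closed form — translating the inner min-max regret optimization into the ``exchange of $k$ items'' picture, pinning down the range of $k$ and the identity of the optimal $T$, and checking that the index-counting bounds still go through when the $\hat c_i$ have ties. Once that form is available, the extremality of the nominal set together with the sign conditions on $1\pm\la$ closes the argument immediately.
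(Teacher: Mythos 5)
Your proof is correct, but it takes a genuinely different route from the paper. The paper argues locally: it takes an optimal selection $\tilde{x}$ that deviates from the nominal one, swaps a cheap unselected item $i\le p$ against an expensive selected item $j>p$, and shows by a case analysis on how the regret-maximizing solution reacts to the swap that the regret does not increase; iterating the exchange yields optimality of $\hat{x}$. You instead derive a global closed form for the worst-case regret of an arbitrary selection $S$, namely $reg(x,\la)=\max_{0\le k\le\min\{p,n-p\}}\bigl((1+\la)\sigma_k(S)-(1-\la)\tau_k(S)\bigr)$, and then show the nominal set is extremal \emph{termwise}: $\sigma_k(S)\ge\sigma_k(\hat S)$ and $\tau_k(S)\le\tau_k(\hat S)$ for every $k$, which together with $1\pm\la\ge 0$ gives $reg(x,\la)\ge reg(\hat x,\la)$ for all $x\in\X$. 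All the individual steps check out: the reduction of the inner minimization to an exchange of size $k$ is valid, the range of $k$ is right, and your index-counting bounds (the $j$-th largest cost in $S$ is at least $\hat c_{p-j+1}$, the $j$-th smallest cost outside $S$ is at most $\hat c_{p+j}$) survive ties if you phrase them via the sorted cost sequences rather than via particular index choices, which is essentially what you do. What your approach buys: it avoids the somewhat delicate tracking of how the regret solution changes under a swap (the paper's case analysis is terse on exactly this point), it gives an explicit, reusable description of $reg(x,\la)$ that also explains why the regret of a fixed selection is cheap to evaluate, and it shows in one stroke that $\hat x$ is optimal for every $\la$ simultaneously, which is the fact the paper subsequently exploits for the compromise problem. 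What the paper's exchange argument buys is brevity of setup and a template that generalizes to other swap-closed feasibility structures without needing a closed-form regret expression.
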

\begin{proof}
We assume that items are sorted with respect to $\hat{c}$. Let $\tilde{x}$ be an optimal solution with $\tilde{x}_i = 0$ for an item $i\le p$. We assume $i$ is the smallest such item. Then there exists some $j>p$ with $\tilde{x}_j = 1$. Consider the solution $x'$ with $x'_k = \tilde{x}_k$ for $k\neq i,j$ and $x'_i = 1$, $x'_j = 0$.

Let $\tilde{y}$ be a regret solution for $\tilde{x}$. We can assume that $\tilde{y}_i = 1$, as $(1-\la)\hat{c}_i$ must be one of the $p$ cheapest items. We can also assume $\tilde{y}_j = 0$, as $(1+\la)\hat{c}_j$ is not among the $p$ cheapest items. Let $y'$ be the regret solution for $x'$.

The solutions $\tilde{x}$ and $x'$ differ only on the two items $i$ and $j$. Hence, the following cases are possible:
\begin{itemize}
\item Case $y'_i = 1$ and $y'_j = 0$, i.e., $\tilde{y} = y'$. We have
\begin{align*}
Reg(\tilde{x}) - Reg(x') =& (1+\la)\hat{c}_j - (1+\la)\hat{c}_i - (1-\la)\hat{c}_i + (1+\la)\hat{c}_i \\
=& (1+\la)\hat{c}_j - (1-\la)\hat{c}_i \ge 0
\end{align*}
\item Case $y_i'=1$ and $y'_j = 1$, $y'_k = 0$ for some $k > i$ with $\tilde{y}_k = 1$. Note that this means $(1+\la)\hat{c}_j \ge (1-\la+2\tilde{x}_k\la)\hat{c}_k$, as otherwise, the regret solution of $\tilde{y}$ could be improved. Hence,
\begin{align*}
Reg(\tilde{x}) - Reg(x') =& (1+\la)\hat{c}_j - (1+\la)\hat{c}_i - (1-\la)\hat{c}_i + (1+\la)\hat{c}_i \\
& - (1-\la+2\la\tilde{x}_k)\hat{c}_k + (1-\la)\hat{c}_j \\
=& (1-\la)(\hat{c}_j - \hat{c}_i) + (1+\la)\hat{c}_j - (1-\la+2\la\tilde{x}_k)\hat{c}_k \ge 0
\end{align*}
\item Case $y_i = 0$ and $y'_\ell =1$ for some $\ell>i$ with $\tilde{y}_\ell = 0$. As the costs of item $i$ have increased by using solution $x'$ instead of $\tilde{x}$, the resulting two cases are analogue to the two cases above.

\end{itemize}
 Overall, solution $x'$ has regret less or equal to the regret of $\tilde{x}$.
\end{proof}

Note that this result does not hold for general interval uncertainty sets, where the problem is NP-hard. It also does not necessarily hold for other combinatorial optimization problems; e.g., a counter-example for the assignment problem can be found in \cite{variable}.

Finally, it remains to show that $val(x)$ can also be computed in polynomial time.

\begin{theorem}
For the compromise min-max regret problem of minimum selection it holds that $|\Lab(x)| \in\mathcal{O}(\min\{p,n-p\})$ for any fixed $x\in\X$, and there is a set $\Lab$ with $|\Lab|\in\mathcal{O}(n^2)$.
\end{theorem}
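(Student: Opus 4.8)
The plan is to first pin down, for a fixed $x\in\X$, the structure of the optimal regret solution as a function of $\la$, and then count how often it can change. Recall that $reg(x,\la)=\max_{y\in\X} c^t(x,\la)(x-y)$ and that a minimizer of $c^t(x,\la)y$ over $y\in\X$ is simply any selection of the $p$ items of smallest cost under $c(x,\la)$. Since every item $i$ with $x_i=1$ has cost $(1+\la)\hat c_i$ and every item with $x_i=0$ has cost $(1-\la)\hat c_i$, the relative order of costs \emph{within} each of the two groups is independent of $\la$. Hence the optimal regret solution at level $\la$ uses the $k(\la)$ nominally cheapest items of $x$ together with the $p-k(\la)$ nominally cheapest items not in $x$, where $k(\la)\in\{\max(0,2p-n),\ldots,p\}$ denotes the number of $x$-items it contains; in particular this set is completely determined by $k(\la)$.

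The key step is to show that $k(\cdot)$ is non-increasing in $\la$. I would argue by contradiction: suppose $k(\la_1)<k(\la_2)$ for some $\la_1<\la_2$. Let $a$ be the nominal cost of the $(k(\la_1){+}1)$-st cheapest item with $x_i=1$ and $b$ the nominal cost of the $(p-k(\la_1))$-th cheapest item with $x_i=0$; both are well-defined because $k(\la_1)<k(\la_2)\le p$ and because the regret solution at $\la_1$ uses exactly $p-k(\la_1)\le n-p$ items outside $x$. Optimality of the regret solution at $\la_1$, which does \emph{not} use that $x$-item but \emph{does} use that non-$x$-item, gives $(1-\la_1)b\le(1+\la_1)a$; optimality at $\la_2$, where the reverse holds since $k(\la_2)\ge k(\la_1)+1$, gives $(1+\la_2)a\le(1-\la_2)b$. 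Combining (and assuming $b>0$), $\tfrac{1-\la_1}{1+\la_1}\le\tfrac ab\le\tfrac{1-\la_2}{1+\la_2}$, contradicting the strict monotonicity of $\la\mapsto\tfrac{1-\la}{1+\la}$. Zero nominal costs and ties in the selection are dispatched separately (e.g.\ via a fixed tie-breaking rule, or by removing always-selected zero-cost items).

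With monotonicity in hand, $k(\cdot)$ takes at most $\min\{p,n-p\}+1$ distinct values on $[0,1]$, and on each maximal interval where $k$ is constant the regret solution $y$ — hence $reg(x,\cdot)=c^t(x,\la)(x-y)$ — is a single affine function. So $reg(x,\cdot)$ has at most $\min\{p,n-p\}+1$ linear pieces, giving $|\Lab(x)|\in\mathcal O(\min\{p,n-p\})$. For the global bound, observe that the changepoint at which $k$ drops from $\kappa$ to $\kappa-1$ is exactly the $\la$ solving $(1+\la)\hat c_g=(1-\la)\hat c_h$, where $g$ is the $\kappa$-th cheapest $x$-item and $h$ the $(p-\kappa+1)$-st cheapest non-$x$-item, i.e.\ $\la=\tfrac{\hat c_h-\hat c_g}{\hat c_h+\hat c_g}$; a changepoint where $k$ jumps by more than one coincides with several such crossings simultaneously, hence is again of this form. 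Therefore every element of every $\Lab(x)$ lies in $\{0,1\}\cup\bigl\{\tfrac{|\hat c_g-\hat c_h|}{\hat c_g+\hat c_h}:g,h\in[n]\bigr\}$, a set of size $\mathcal O(n^2)$, which we may take as $\Lab$.

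I expect the main obstacle to be making the monotonicity argument fully rigorous: the optimality inequalities must be justified with care when the optimal regret solution is non-unique (ties), and one must verify throughout that the indices into the two sorted lists stay in range — which is where the constraints $k(\la_1)<k(\la_2)\le p$ and $p-k(\la_1)\le n-p$ are used. Once monotonicity is established, the piece count and the identification of the $\mathcal O(n^2)$ candidate changepoints are routine.
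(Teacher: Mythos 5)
Your proposal is correct and follows essentially the same route as the paper: for fixed $x$ the regret solution is determined by how many $x$-items versus non-$x$-items are among the $p$ cheapest, swaps between the two groups occur monotonically in $\la$ (giving at most $\min\{p,n-p\}$ changes), and the global set $\Lab$ consists of all crossings $(1-\la)\hat{c}_h=(1+\la)\hat{c}_g$, of which there are $\mathcal{O}(n^2)$. The only difference is that you make explicit, via the ratio argument with $\la\mapsto\frac{1-\la}{1+\la}$, the one-directional (monotone) nature of the swaps, which the paper leaves implicit.
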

\begin{proof}
If $x$ is fixed, then there are $p$ items $i$ with costs $(1+\lambda)\hat{c}_i$, and $(n-p)$ items $i$ with costs $(1-\lambda)\hat{c}_i$. The regret solution is determined by the $p$ smallest items. Accordingly, when $\la$ increases, the regret solution only changes if an item $i$ with $x_i=1$, that used to be among the $p$ smallest items, moves to the $(n-p)$ largest items, and another item $j$ with $x_j=0$ becomes part of the $p$ smallest items. There are at most $\min\{p,n-p\}$ values for $\la$ where this is the case.


We define $\Lab$ to consist of all $\la\in[0,1]$ such that
\[ (1-\la) \hat{c}_i = (1+\la) \hat{c}_j \]
for some $i,j\in[n]$, as only for such values of $\la$ an optimal regret solution may change. Hence, $|\Lab|\in\mathcal{O}(n^2)$.

\end{proof}

As the size of $\Lab(x)$ is polynomially bounded, $val(x)$ can be computed in polynomial time, and we get the following conclusion.
\begin{corollary}
The compromise min-max regret problem of minimum selection can be solved in polynomial time.
\end{corollary}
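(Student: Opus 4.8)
The plan is to combine the two theorems of this subsection. The first of them asserts that, for \emph{every} fixed $\lambda\in[0,1]$, a nominal solution $\hat{x}$ is optimal for the min-max regret selection problem with uncertainty $\cU(\lambda)$; the exchange argument in its proof is carried out for an arbitrary fixed $\lambda$, so in fact $\hat{x}$ minimizes the map $x\mapsto reg(x,\lambda)$ pointwise in $\lambda$. Since the compromise objective is $val(x)=\int_0^1 reg(x,\lambda)\,d\lambda$, i.e. an integral of these functions against the nonnegative weight $w\equiv 1$, pointwise optimality transfers at once: $val(\hat{x})\le val(x)$ for every $x\in\X$. Hence a nominal solution is optimal for (C), and such a solution is produced in $\mathcal{O}(n\log n)$ time by sorting the items by $\hat{c}$ and taking the $p$ cheapest.

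It then remains only to report the optimal objective value $val(\hat{x})$ in polynomial time. Here I would invoke the second theorem of this subsection: for the fixed solution $\hat{x}$ the breakpoint set $\Lab(\hat{x})$ has size $\mathcal{O}(\min\{p,n-p\})$, and the global candidate set $\Lab$ determined by the equations $(1-\lambda)\hat{c}_i=(1+\lambda)\hat{c}_j$ has size $\mathcal{O}(n^2)$ and can be listed explicitly. Running Algorithm~\ref{alg:val} on $\hat{x}$ — equivalently, sorting the breakpoints, taking the midpoint $\overline{\lambda}_j$ of each resulting subinterval, solving the nominal selection problem with costs $c(\hat{x},\overline{\lambda}_j)$ (an $\mathcal{O}(n\log n)$ task) to obtain the regret solution $y^j$, and summing $\sum_j(\lambda_{j+1}-\lambda_j)\,c^t(\hat{x},\overline{\lambda}_j)(\hat{x}-y^j)$ — evaluates $val(\hat{x})$ with a polynomial number of arithmetic operations and calls to a polynomial selection oracle. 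Putting the two parts together gives the claimed polynomial-time algorithm for (C).

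The conceptual content sits entirely in the first step — the fact that the \emph{same} nominal solution is simultaneously optimal for all $\lambda$ — and this is exactly what the first theorem supplies; everything afterwards is bookkeeping over a polynomially bounded breakpoint structure. The only point that warrants an explicit line is well-definedness: each $reg(x,\lambda)$ is a finite maximum of affine functions over the compact interval $[0,1]$, hence bounded and piecewise linear, so $val$ is finite and the exchange of ``$\min$ over $x$'' with ``$\int$ over $\lambda$'' used above is legitimate. I do not anticipate any genuine obstacle beyond stating these routine facts precisely.
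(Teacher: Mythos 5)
Your proposal is correct and follows essentially the same route as the paper: the preceding theorem gives that the nominal solution $\hat{x}$ minimizes the regret for every fixed $\la$, hence (by nonnegativity of the weight) minimizes the integral $val(\cdot)$, and the breakpoint bound $|\Lab(\hat{x})|\in\mathcal{O}(\min\{p,n-p\})$ makes the evaluation of $val(\hat{x})$ polynomial, which is exactly the argument the paper sketches before stating the corollary. Your explicit remarks on pointwise domination and on evaluating the integral via midpoints of consecutive breakpoints are just a more detailed write-up of the same reasoning.
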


\subsection{Minimum Spanning Tree}\label{sec:mst}

The min-max regret spanning tree problem in a graph $G=(V,E)$ has previously been considered, e.g., in \cite{yaman2001robust,kasperski2011approximability}. The regret of a fixed solution can be computed in polynomial time, but it is NP-hard to find an optimal solution. We now consider the compromise min-max regret counterpart (C).

Let any spanning tree $x$ be fixed. To compute $val(x)$, we begin with $\lambda = 0$ and calculate a regret spanning tree by solving a nominal problem with costs $\hat{c}$. Recall that this can be done using Kruskal's algorithm that considers edges successively according to an increasing sorting
\[ \hat{c}_{e_1} \le \ldots \le \hat{c}_{e_{|E|}} \]
with respect to costs. If $\lambda$ increases, edges that are included in $x$ have costs $(1+\lambda)\hat{c}_e$ (i.e., their costs increase) and edges not in $x$ have costs $(1-\lambda)\hat{c}_e$ (i.e., their costs decrease). Kruskal's algorithm will only find a different solution if the sorting of edges change. As there are $|V|-1$ edges with increasing costs, and $|E|-|V|+1$ edges with increasing costs, the sorting can change at most $(|V|-1)(|E|-|V|+1) = \mathcal{O}(|E|^2)$ times (note that two edges with increasing costs or two edges with decreasing costs cannot change relative positions). We have therefore shown:

\begin{theorem}
A solution to the compromise min-max regret problem of minimum spanning tree can be evaluated in polynomial time.
\end{theorem}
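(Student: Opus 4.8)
\noindent\emph{Proof proposal.} Note first that the statement asks only that $val(x)$ be evaluable in polynomial time for a \emph{given} spanning tree $x$ — not that (C) be solved — so the plan is to make the sketch preceding the theorem precise: bound the number of affine pieces of $reg(x,\cdot)$ by $\mathcal{O}(|E|^2)$, then integrate piecewise. With $x$ fixed, the cost of an in-tree edge $e$ is $c_e(x,\la) = (1+\la)\hc_e$ (nondecreasing affine in $\la$) and the cost of an out-tree edge is $c_e(x,\la) = (1-\la)\hc_e$ (nonincreasing affine in $\la$), and $reg(x,\la) = c^t(x,\la)x - \mathrm{MST}(c(x,\la))$, where $\mathrm{MST}(\cdot)$ denotes the weight of a minimum spanning tree. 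Since $c^t(x,\la)x$ is affine in $\la$, everything reduces to understanding the function $\la \mapsto \mathrm{MST}(c(x,\la))$.

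The key step is to show this function is continuous and piecewise affine, with breakpoints only at values of $\la$ where the sorted order of the $|E|$ edge costs changes. Running Kruskal's algorithm with a fixed tie-breaking rule, on any open $\la$-interval throughout which the sorted order of the cost values is constant the algorithm selects a fixed edge set $y$, so $\mathrm{MST}(c(x,\la)) = \sum_{e\in y} c_e(x,\la)$ is affine there; hence every changepoint of $reg(x,\cdot)$ is a $\la$ at which two edge costs coincide. The counting is then immediate: two in-tree edges keep their relative order (both costs are $(1+\la)$ times a positive constant), and likewise two out-tree edges, so only an in-tree edge $e$ and an out-tree edge $f$ can swap, and $(1+\la)\hc_e = (1-\la)\hc_f$ has at most one root in $[0,1]$. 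There are therefore at most $(|V|-1)(|E|-|V|+1) = \mathcal{O}(|E|^2)$ such $\la$; collecting all of them yields a set $\Lab(x)$ of size $\mathcal{O}(|E|^2)$ that contains every changepoint of $reg(x,\cdot)$.

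To finish, sort $\Lab(x) \cup \{0,1\}$ as $0 = \la_0 < \dots < \la_M = 1$ with $M = \mathcal{O}(|E|^2)$, and on each subinterval solve the nominal spanning tree problem with costs $c(x,\ol_j)$ at the midpoint $\ol_j = \frac{1}{2}(\la_j+\la_{j+1})$ (which, lying strictly inside a sorting regime, avoids ties and so pins down an unambiguous regret tree $y^j$). Then $reg(x,\la) = c^t(x,\la)(x-y^j)$ on $[\la_j,\la_{j+1}]$, hence $\int_{\la_j}^{\la_{j+1}} reg(x,\la)\,d\la = (\la_{j+1}-\la_j)\,c^t(x,\ol_j)(x-y^j)$, and summing these contributions gives $val(x)$. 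The total work is one sort plus $\mathcal{O}(|E|^2)$ calls to Kruskal's algorithm, which is polynomial.

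The main obstacle I anticipate is the middle step: arguing rigorously that the affine pieces of $reg(x,\cdot)$ cannot subdivide a sorting regime. The subtlety is that the minimum-spanning-tree \emph{solution} can be ambiguous at ties, whereas the MST \emph{weight} is a well-defined continuous function of $\la$; the argument must therefore be phrased in terms of the weight, together with the observation that Kruskal's algorithm with a fixed tie-break returns a constant edge set on the open intervals between consecutive breakpoints and that ties occur only at those (finitely many) breakpoints. Once this is established, the $\mathcal{O}(|E|^2)$ bound and the piecewise integration are routine bookkeeping.
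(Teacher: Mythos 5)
Your proposal is correct and follows essentially the same route as the paper: for fixed $x$, in-tree costs $(1+\la)\hat{c}_e$ and out-tree costs $(1-\la)\hat{c}_e$ can only swap across the two groups, so Kruskal's ordering changes at most $(|V|-1)(|E|-|V|+1)=\mathcal{O}(|E|^2)$ times, bounding the changepoints of $reg(x,\cdot)$ and allowing piecewise-affine integration. Your added care about tie-breaking and the explicit midpoint integration merely fill in details the paper leaves implicit.
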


If the solution $x$ is not known, we can still construct a set $\Lab$ with size $\mathcal{O}(|E|^2)$ that contains all possible changepoints along the same principle. We can conclude:
\begin{theorem}
There exists a compact mixed-integer programming formulation for the compromise min-max regret problem of minimum spanning tree.
\end{theorem}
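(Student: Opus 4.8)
The plan is to build on the preceding theorem, which already establishes that a solution to the compromise min-max regret spanning tree problem can be evaluated in polynomial time, and specifically on the combinatorial argument used there bounding the number of changepoints. The key observation is that the set $\Lab(x)$ of changepoints for any fixed $x$ consists of values $\la$ where the Kruskal sorting of the modified costs $c(x,\la)$ changes, and such a change can only occur when an edge $e$ with $x_e = 1$ (cost $(1+\la)\hat c_e$) crosses an edge $f$ with $x_f = 0$ (cost $(1-\la)\hat c_f$). This crossing happens at the single value $\la$ solving $(1+\la)\hat c_e = (1-\la)\hat c_f$, and this value does \emph{not} depend on the specific tree $x$ — it only depends on which orientation (increasing vs.\ decreasing) the two edges have. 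So first I would define $\Lab$ to be the set of all $\la \in [0,1]$ with $(1+\la)\hat c_e = (1-\la)\hat c_f$ for some ordered pair $e \neq f$ of edges; this set has at most $|E|(|E|-1) = \mathcal{O}(|E|^2)$ elements and, crucially, satisfies $\Lab \supseteq \bigcup_{x\in\X}\Lab(x)$.

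Second, I would invoke Formulation~\eqref{p1} (or \eqref{p2} if one wants to exploit that the spanning tree polytope has an integral description, giving zero duality gap, and linearize the product $c^t(x,\ol_j)x$ for binary $x$ as noted in the text). Since $\Lab$ is now a fixed, explicitly-constructible set of polynomial size, the sum $\sum_{\la_j \in \Lab}(\la_{j+1}-\la_j) z_j$ has polynomially many terms, each constraint $z_j \ge \sum_i (1+\ol_j)\hat c_i x_i - \sum_i (1-\ol_j + 2\ol_j x_i)\hat c_i y^\ell_i$ ranges over polynomially many $\la_j$ and over regret solutions $y^\ell$; in the zero-duality-gap case these exponentially many $y$-constraints collapse to the dual formulation~\eqref{p2} with one dual vector $u^j$ per $\la_j$, yielding polynomially many variables and constraints. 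The remaining ingredient is to confirm that the objective value computed by this MIP equals $val(x)$ for the optimal $x$: because $\Lab$ contains all changepoints of $reg(x,\cdot)$ for every $x$, within each subinterval $[\la_j,\la_{j+1}]$ the function $reg(x,\cdot)$ is affine, so the integral over that subinterval equals $(\la_{j+1}-\la_j)\,reg(x,\ol_j)$ evaluated at the midpoint $\ol_j$, exactly as in the worked computation of $val(x)$ earlier in the paper; summing over $j$ reproduces $val(x)$.

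The main obstacle — really the only point needing care — is the claim $\Lab \supseteq \bigcup_{x}\Lab(x)$, i.e.\ that every changepoint of every tree's regret function is captured by a tree-independent crossing equation. I would argue this by analysing why $reg(x,\la)$ is piecewise linear: its slope changes precisely when the minimizing regret solution $y$ (equivalently, the Kruskal output on costs $c(x,\la)$) changes, and as the text's spanning-tree discussion notes, two increasing-cost edges never swap relative order and two decreasing-cost edges never do either, so any order change is a swap between an $x_e=1$ edge and an $x_f=0$ edge, occurring at the root of $(1+\la)\hat c_e = (1-\la)\hat c_f$. Since the partition of edges into the two cost classes depends on $x$ but the crossing value for a given ordered pair $(e,f)$ does not, taking the union over all ordered pairs yields a single $\Lab$ valid for all $x$. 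A minor subtlety is handling ties at $\la=0$ (where all edges have cost $\hat c_e$) and ensuring degenerate cases — equal nominal costs, or $\hat c_f = 0$ — do not create spurious or missing changepoints; these are handled by standard tie-breaking and by noting $\hat c \ge 0$ so $(1+\la)\hat c_e = (1-\la)\hat c_f$ has at most one solution in $[0,1]$ unless both sides vanish identically. With $\Lab$ in hand and of size $\mathcal{O}(|E|^2)$, Formulation~\eqref{p1}/\eqref{p2} is compact, completing the proof.
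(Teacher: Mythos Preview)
Your proposal is correct and follows essentially the same approach as the paper: the paper's own justification is the one-line remark that even without knowing $x$ one can construct a set $\Lab$ of size $\mathcal{O}(|E|^2)$ containing all possible changepoints ``along the same principle'' (i.e., the crossing equations $(1+\la)\hat c_e = (1-\la)\hat c_f$), and then Formulation~\eqref{p2} with the spanning-tree polytope's zero duality gap gives the compact MIP. Your write-up simply spells out in detail what the paper leaves implicit.
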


However, we show in the following that solving the compromise problem is NP-hard. To this end, we use the following result:

\begin{theorem}{\cite{averbakh2004interval}}
The min-max regret spanning tree problem is NP-hard, even if all intervals of uncertainty are equal to $[0,1]$.
\end{theorem}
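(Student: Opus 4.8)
The plan is to prove hardness in two stages: first reduce the \emph{evaluation} of the worst-case regret of a fixed spanning tree to a transparent combinatorial quantity, and then reduce a standard NP-complete problem to the optimization problem that results. Throughout I write $n=|V|$ and identify a spanning tree $T$ with its edge set.

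First I would establish the regret of a fixed tree. For interval uncertainty the worst case for a fixed solution is attained at the extreme scenario in which the chosen elements take their upper bounds and the unchosen ones their lower bounds; with all intervals equal to $[0,1]$ this means $c_e=1$ for $e\in T$ and $c_e=0$ otherwise. Then
\[ reg(T)=\max_{c\in\cU}\Big(\textstyle\sum_{e\in T}c_e-opt(c)\Big)=(n-1)-\min_{T'}|T'\cap T|, \]
where the minimum ranges over all spanning trees $T'$. A minimum spanning tree in the $0/1$ scenario uses as many zero-cost (non-tree) edges as possible, i.e.\ a maximum forest inside $E\setminus T$; this forest has $n-k(T)$ edges, where $k(T)$ is the number of connected components of $(V,E\setminus T)$, and the remaining $k(T)-1$ edges needed to reconnect must be tree edges. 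Hence $\min_{T'}|T'\cap T|=k(T)-1$ and
\[ reg(T)=n-k(T). \]
Consequently, minimizing the worst-case regret over spanning trees is equivalent to \emph{maximizing} $k(T)$: one seeks a spanning tree whose edge deletion breaks $G$ into as many components as possible (equivalently, a complement-of-a-spanning-tree edge set $E\setminus T$ of minimum rank in the graphic matroid). The decision version asks whether some spanning tree satisfies $k(T)\ge n-B$ for a given bound $B$.

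For the hardness itself I would, following Averbakh and Lebedev, reduce a standard NP-complete problem to this decision problem. The structural handle is that a vertex $v$ becomes isolated in $(V,E\setminus T)$ exactly when all edges incident to $v$ lie in $T$, and — since $T$ is a forest — the collection of edges incident to the set of isolated vertices must itself be acyclic. I would therefore build a gadget graph $G$ in which a large component count $k(T)$ is achievable \emph{only} when the tree is forced to absorb all edges incident to a prescribed family of vertices, so that a feasible choice in the source instance corresponds precisely to a family of vertices that a single spanning tree can simultaneously isolate, with $B$ calibrated to the source target value.

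The main obstacle is this last step: designing the gadgets so that the global acyclicity-plus-connectivity constraint of a spanning tree \emph{exactly} encodes the discrete choice of the source instance, and then proving both directions — that a YES-instance yields a tree attaining the required number of components, and conversely that any tree meeting the bound $n-B$ induces a valid solution of the source instance. Stages one and two are routine once the extreme-scenario characterization is invoked; essentially all the difficulty is concentrated in the gadget construction and the soundness argument.
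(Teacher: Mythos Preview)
The paper does not prove this theorem at all: it is quoted from \cite{averbakh2004interval} as a known result and used as a black box for the subsequent hardness of the compromise problem. The only computation the paper performs nearby is the identity $reg(x)=(b-a)\max_{y\in\X}\sum_{e:\,x_e=0}y_e$ for uniform intervals $[a,b]$, which is equivalent to your formula $reg(T)=n-k(T)$ but serves a different purpose (showing that the $[0,1]$ case and any other $[a,b]$ case share the same optimal solutions). There is therefore no ``paper's own proof'' to compare your proposal against.

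On the proposal itself: your first stage is correct, and recasting the problem as maximizing the number of connected components of $(V,E\setminus T)$ over spanning trees $T$ is the right combinatorial target. Your second stage, however, is not a proof but a declared intention --- you name neither the source NP-complete problem nor the gadget construction, and you explicitly acknowledge that ``essentially all the difficulty is concentrated in the gadget construction and the soundness argument.'' That missing construction is exactly the content of the Averbakh--Lebedev result; as written, the proposal defers the hardness argument rather than supplying it, so it cannot stand as an independent proof.
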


\noindent Note that if all intervals are of the form $[a,b]$, then
\begin{align*} 
reg(x) &= \sum_{e\in E} b x_e - \min_{y\in\X} \left( \sum_{e\in E\atop x_e = 1} by_e + \sum_{e\in E\atop x_e = 0} ay_e\right) \\
&= (|V|-1)b - \min_{y\in\X} \left( (|V|-1)b - \sum_{e\in E\atop x_e = 0} (b-a) y_e\right) \\
&= (b-a) \max_{y\in\X} \sum_{e\in E\atop x_e = 0}  y_e
\end{align*}
Therefore, the min-max regret problem with costs $[0,1]$ is equivalent to the min-max regret problem with any other costs $[a,b]$, in the sense that objective values only differ by a constant factor and
both problems have the same set of optimal solutions. In particular, a solution $y$ that maximizes the regret of $x$ with respect to cost intervals $[a,b]$ is also a maximizer of the regret for any other cost intervals $[a',b']$. We can conclude:

\begin{theorem}
The compromise problem of min-max regret minimum spanning tree is NP-hard, even if $w(\la) = 1$ for all $\la\in[0,1]$.
\end{theorem}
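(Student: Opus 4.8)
The plan is to reduce the NP-hard min-max regret spanning tree problem with all intervals equal to $[0,1]$ to the compromise problem (C) with $w\equiv 1$. The key observation, established by the displayed computation just before the statement, is that for any cost intervals $[a,b]$ the regret of a spanning tree $x$ is $reg(x) = (b-a)\max_{y\in\X}\sum_{e:x_e=0} y_e$, so the regret value depends on $[a,b]$ only through the factor $(b-a)$, and the set of optimal solutions is independent of $[a,b]$.

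First I would instantiate the compromise objective for this family of instances. With $\hat c_e = 1$ for all $e\in E$ (so that the nominal interval is $[1,1]$ — but note we actually want the uncertainty intervals $\cU(\la) = \prod_e[(1-\la)\hat c_e,(1+\la)\hat c_e] = \prod_e[1-\la,1+\la]$, i.e. $[a,b] = [1-\la,1+\la]$ with $b-a = 2\la$), the formula above gives
\[ reg(x,\la) = 2\la \max_{y\in\X}\sum_{e:x_e=0} y_e =: 2\la\, R(x), \]
where $R(x)$ is a constant not depending on $\la$ (it is exactly the min-max regret value of $x$ with respect to the intervals $[0,1]$). Consequently
\[ val(x) = \int_0^1 reg(x,\la)\,d\la = \int_0^1 2\la\, R(x)\, d\la = R(x). \]
So on this family of instances, $val(x)$ coincides with the $[0,1]$-interval min-max regret value of $x$ (up to the constant factor $1$), hence the minimizers of $val$ are precisely the minimizers of the $[0,1]$ min-max regret problem.

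Then I would conclude: deciding whether there is a spanning tree $x$ with $val(x)\le \kappa$ is equivalent to deciding whether there is a spanning tree with $[0,1]$-regret at most $\kappa$, which is NP-hard by the theorem of Averbakh and Lebedev cited above. Since the reduction is trivially polynomial (we simply set $\hat c_e = 1$ on the same graph), the compromise problem (C) for min-max regret spanning tree with $w\equiv 1$ is NP-hard.

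The only real subtlety — and the step I would be most careful about — is keeping the interval endpoints straight: the compromise model uses intervals $[(1-\la)\hat c_e,(1+\la)\hat c_e]$, which for unit nominal costs are $[1-\la,1+\la]$, so the relevant width is $b-a = 2\la$ rather than $\la$; this is why the $\la$-integral collapses so cleanly and produces exactly $R(x)$. One should also note, as the paragraph preceding the statement already does, that a regret-maximizing $y$ for intervals $[1-\la,1+\la]$ is simultaneously regret-maximizing for $[0,1]$, so $R(x)$ is genuinely the $[0,1]$ min-max regret value and the equivalence of optimal solution sets is exact, not merely approximate. Beyond that the argument is a one-line reduction, so there is no substantial obstacle.
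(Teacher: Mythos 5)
Your reduction is exactly the paper's: set $\hat c_e = 1$ and $w\equiv 1$, use the identity $reg(x,\la) = 2\la\max_{y\in\X}\sum_{e:x_e=0}y_e$ from the preceding computation, and integrate to get $val(x)$ equal to the $[0,1]$-interval regret value, so minimizers coincide and NP-hardness follows from the Averbakh--Lebedev result. The proposal is correct and takes essentially the same approach as the paper.
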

\begin{proof}
Let an instance of the min-max regret spanning tree problem with cost intervals $[0,1]$ be given. Consider an instance of the compromise problem with $\hat{c}_e = 1$ for all $e\in E$, and $w(\la) = 1$. Then
\[ val(x) = \int_0^1 reg(x,\la) d\la = \int_0^1 \left( 2\la \max_{y\in\X} \sum_{e\in E\atop x_e = 0} y_e \right) d\la = \max_{y\in\X} \sum_{e\in E\atop x_e = 0} y_e \]
Hence, any minimizer of $val(x)$ is also an optimal solution to the min-max regret spanning tree problem.
\end{proof}

\subsection{Shortest Path}

For the shortest path problem, we consider $\X$ as the set of all simple $s-t$ paths in a graph $G=(V,E)$ (for the min-max regret problem, see, e.g., \cite{averbakh2004interval}). As for the minimum spanning tree problem, the regret of a fixed solution can be computed in polynomial time, but it is NP-hard to find an optimal solution.

\noindent For the compromise problem (C), we have:
\begin{align*}
reg(x,\la) &= \sum_{e\in E\atop x_e = 1} (1+\la)\hat{c}_e - \left( \min_{y\in\X}\  \sum_{e\in E\atop x_e = 1} (1+\la)\hat{c}_e y_e + \sum_{e\in E\atop x_e = 0} (1-\la)\hat{c}_e y_e \right) \\
&= \sum_{e\in E\atop x_e = 1} (1+\la)\hat{c}_e + \left( \min_{y\in\X}\  \la \sum_{e\in E\atop x_e = 1} 2\hat{c}_ey_e + (1-\la) \sum_{e\in E} \hat{c}_e y_e \right)
\end{align*}
We can interpret the minimization problem as a weighted sum of the bicriteria problem
\[ \min \left\{ \begin{pmatrix} \sum_{e\in E\atop x_e = 1} 2\hat{c}_ey_e \\ \sum_{e\in E} \hat{c}_e y_e \end{pmatrix} : y\in \X \right\} \]
The number of solutions we need to generate to compute $val(x)$ can therefore be bounded by the number of solutions we can find through such weighted sum computations (the set of extreme efficient solutions $\mathcal{E}$).

\begin{lemma}
For the compromise shortest path problem, it holds that $|\Lab(x)| \le |\mathcal{E}|$.
\end{lemma}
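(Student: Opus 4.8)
The plan is to show that the piecewise-linear function $reg(x,\cdot)$ on $[0,1]$ has at most $|\mathcal{E}|$ changepoints by exhibiting an injection from $\Lab(x)$ into $\mathcal{E}$, the set of extreme efficient solutions of the bicriteria problem stated just above the lemma. First I would recall the structure already established in Section~4.1: for fixed $x$, $reg(x,\la)$ is the upper envelope of the affine functions $\la\mapsto c^t(x,\la)(x-y)$ over $y\in\X$, so the changepoints of $reg(x,\cdot)$ are exactly the values of $\la$ at which the identity of the regret-maximizing $y$ (equivalently, the regret-\emph{minimizing} $y$ in $\min_{y\in\X}$) changes. Using the rewriting of $reg(x,\la)$ given immediately before the lemma, the inner minimization is $\min_{y\in\X} \la A(y) + (1-\la) B(y)$ where $A(y) = \sum_{e: x_e=1} 2\hat c_e y_e$ and $B(y) = \sum_{e\in E}\hat c_e y_e$ are the two objectives of the bicriteria problem; since the first ``constant'' term $\sum_{e:x_e=1}(1+\la)\hat c_e$ is affine in $\la$ and contributes nothing to which $y$ is chosen, the changepoints of $reg(x,\cdot)$ are precisely the breakpoints of the parametric optimal value $\phi(\la) := \min_{y\in\X}\{\la A(y) + (1-\la)B(y)\}$ as $\la$ ranges over $[0,1]$.

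The key step is the standard parametric-optimization fact that $\phi$ is concave piecewise linear in $\la$, and on each linearity interval the minimum is attained by a solution $y$ that is an \emph{extreme} point of the lower-left (Pareto) frontier of $\{(A(y),B(y)): y\in\X\}$ — i.e., an element of $\mathcal{E}$ — because a positive-weight scalarization of the two objectives can only pick out extreme efficient solutions. Distinct linearity intervals of $\phi$ correspond to distinct supporting slopes of this frontier, hence to distinct points of $\mathcal{E}$; this gives a well-defined injection from the set of linearity intervals (there are $|\Lab(x)|+1$ of them, one more than the number of changepoints) into $\mathcal{E}$. To get the clean bound $|\Lab(x)|\le|\mathcal{E}|$ I would note that the number of internal changepoints is the number of linearity intervals minus one, and that each linearity interval uses at least one distinct extreme efficient solution, so $|\Lab(x)| = (\#\text{intervals}) - 1 \le |\mathcal{E}| - 1 \le |\mathcal{E}|$; in fact one can even match changepoints to the ``new'' extreme solution appearing to the right of it, which is cleaner. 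I would also remark that $\Lab(x)\subseteq[0,1]$ only uses the portion of the frontier visible for weights $\la/(1-\la)\in[0,\infty)$, which is harmless for the inequality.

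The main obstacle — really the only delicate point — is justifying that every $y$ optimal on a linearity interval can be taken to be an extreme efficient solution, and that the correspondence ``linearity interval $\to$ extreme efficient solution'' can be made injective rather than merely a relation. Here one must be careful about ties: at a changepoint $\la_k$ two adjacent efficient solutions $y, y'$ are both optimal, but in the interior of each adjoining interval the optimal solution is essentially unique up to solutions with identical $(A,B)$-value, and different intervals have different supporting slopes so cannot share the same extreme point of the frontier. Making this precise amounts to observing that the breakpoints of $\phi$ are in bijection with the vertices of the (finite) lower envelope $\mathrm{conv}\{(A(y),B(y)):y\in\X\}$ restricted to the relevant slope range, and the number of such vertices is at most $|\mathcal{E}|$ since $\mathcal{E}$ is by definition the set of extreme efficient solutions, which are exactly those vertices (modulo solutions with coincident objective vectors, which only reduce the count). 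Everything else is the routine parametric-programming bookkeeping already sketched in Figure~\ref{fig1} and the surrounding text, so the write-up should be short.
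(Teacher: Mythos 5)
Your proposal is correct and follows essentially the same route as the paper, which leaves the lemma as an immediate consequence of the preceding observation that the inner minimization in $reg(x,\la)$ is a weighted-sum scalarization of the stated bicriteria shortest path problem, so each linearity piece of the parametric optimum is attained by an extreme efficient solution. Your write-up merely makes the parametric-programming bookkeeping (concavity of the inner optimal value, distinct slopes per interval, intervals versus changepoints) explicit, which is a fine elaboration rather than a different argument.
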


Depending on the graph $G$, the following bounds on the number of extreme efficient solutions $\mathcal{E}$ (see, e.g.,  \cite{ehrgott2006multicriteria}) can be taken from the literature \cite{carstensen1983complexity,variable}:
\begin{itemize}
\item for series-parallel graphs, $\mathcal{E} \in \mathcal{O}(|E|)$
\item for layered graphs with width $w$ and length $\ell$, $\mathcal{E} \in \mathcal{O}(2^{\log w \log (\ell+1)})$
\item for acyclic graphs, $\mathcal{E} \in \mathcal{O}(|V|^{\log|V|})$
\item for general graphs $\mathcal{E}\in 2^{\Omega(\log^2|V|)}$
\end{itemize}

We can conclude:
\begin{corollary}
A solution to the compromise min-max regret problem of shortest path can be evaluated in polynomial time on series-parallel graphs and layered graphs with fixed width or length.
\end{corollary}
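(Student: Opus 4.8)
The plan is to combine the preceding Lemma ($|\Lab(x)|\le|\mathcal{E}|$) with the cited combinatorial bounds on $|\mathcal{E}|$, and then to verify that the evaluation routine (Algorithm~\ref{alg:val}) does only polynomially much work per relevant regret solution and per changepoint. First I would recall the reformulation already established above: for fixed $x$, every regret solution $y$ that is optimal for some $\la\in[0,1]$ is an optimal solution of a weighted-sum scalarisation $\min_{y\in\X}\{\mu_1\sum_{e:x_e=1}2\hat{c}_ey_e+\mu_2\sum_{e\in E}\hat{c}_ey_e\}$ with nonnegative weights $(\mu_1,\mu_2)$ that depend affinely on $\la$. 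Hence each such $y$ lies in the set $\mathcal{E}$ of extreme efficient solutions of the stated bicriteria path problem, so both the number of distinct affine regret functions ever produced and $|\Lab(x)|$ are bounded by $|\mathcal{E}|$.

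Next I would instantiate the three cases. For series-parallel graphs $|\mathcal{E}|\in\mathcal{O}(|E|)$, which is already polynomial. For a layered graph of width $w$ and length $\ell$, $|\mathcal{E}|\in\mathcal{O}(2^{\log w\,\log(\ell+1)})$; if $w$ is a fixed constant then $2^{\log w\,\log(\ell+1)}=(\ell+1)^{\log w}$ is a polynomial in $\ell$ of constant degree, hence polynomial in the input size, and symmetrically, if $\ell$ is fixed then $w^{\log(\ell+1)}$ is polynomial in $w$. Thus in all three cases $|\mathcal{E}|$, and therefore the number of relevant regret solutions and changepoints, is polynomially bounded.

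Then I would check the running time of Algorithm~\ref{alg:val} under this bound: each pass through the outer loop solves one nominal shortest path instance with costs $c(x,\la)$ (a weighted-sum instance solvable in polynomial time by Dijkstra/Bellman–Ford), each pair of current regret functions contributes at most one new candidate $\la$, and by the observation following Algorithm~\ref{alg:val} every genuinely new maximal regret function is detected at some such intersection. Consequently the procedure stabilises once all at most $|\mathcal{E}|$ relevant functions have been generated; the concluding reduction to the upper envelope of $\mathcal{O}(|\mathcal{E}|)$ affine functions, and the summation of the at most $|\Lab(x)|$ integral pieces to obtain $val(x)$, are also polynomial. Combining this with the bounds of the second step gives the corollary.

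The step I expect to be the main obstacle is the bookkeeping in the third paragraph: one must argue that Algorithm~\ref{alg:val} cannot generate more than polynomially many intermediate candidate values of $\la$ before stabilising — in particular that intersection points arising from non-maximal regret functions do not inflate the iteration count — and that each weighted-sum subproblem indeed returns an extreme efficient solution, so that the count is governed by $|\mathcal{E}|$ rather than by the potentially larger number of all efficient (or all optimal-for-some-$\la$) solutions. Once that is pinned down, the statement follows directly from the enumeration bounds for series-parallel and layered graphs.
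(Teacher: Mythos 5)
Your proposal is correct and follows essentially the same route as the paper, which simply combines the lemma $|\Lab(x)|\le|\mathcal{E}|$ with the cited bounds on extreme efficient solutions (noting that $2^{\log w\log(\ell+1)}$ becomes polynomial when $w$ or $\ell$ is fixed) and the fact that each evaluation step is a nominal shortest path computation. The bookkeeping worry you flag is not a real obstacle: every call to (P) at a candidate $\la$ returns a regret function attaining the upper envelope at that $\la$, and since each distinct $\la$ is solved at most once, the number of generated functions and intersection points stays polynomial in $|\mathcal{E}|$.
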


Note that the number of extreme efficient solutions is only an upper bound on $\Lab(x)$. Unfortunately, we cannot hope to find a better performance than this bound, as the following result demonstrates.

\begin{theorem}
For any bicriteria shortest path instance with costs $(a,b)$, $a_e>0$ for all $e\in E$, there is an instance of (C) and a solution $x$ where $\Lab(x) = |\mathcal{E}|$.
\end{theorem}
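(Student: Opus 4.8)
The plan is to realise the given bicriteria instance, essentially verbatim, as the parametric minimisation that controls $reg(x,\cdot)$, and then simply read off the changepoints. Recall from the identity displayed just above the statement that for a fixed $s$--$t$ path $x$ the function $reg(x,\la)$ is the sum of an affine function of $\la$ and the value $\min_{y\in\X}\big(2\la\,f_1(y)+(1-\la)\,f_2(y)\big)$, where $f_1(y)=\sum_{e:x_e=1}\hat c_ey_e$ and $f_2(y)=\sum_{e\in E}\hat c_ey_e$; hence $reg(x,\cdot)$ is convex piecewise linear and its changepoints are exactly the values of $\la$ at which the optimiser of that parametric problem jumps. Before building the instance I would normalise: multiplying $a$ and $b$ by positive constants scales the objective space by a positive diagonal map, which preserves the set $\mathcal E$ of extreme supported efficient solutions, so after scaling $a$ down I may assume $0\le a_e\le b_e$ for all $e$ (the corner case forcing some $b_e=0$ is degenerate and handled separately).

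The obstacle, and the heart of the construction, is that $x$ must be one single path while $f_1$ has to carry an independently chosen coefficient on every edge of $G$. I would resolve this with a ``rail''. Subdivide each edge $e=(u,v)\in E$ into three edges $e_L=(u,m_e)$, $e_M=(m_e,m_e')$, $e_R=(m_e',v)$ on two fresh vertices, fix an ordering $e^1,\dots,e^N$ of $E$, and add the edges $(s,m_{e^1})$, $(m_{e^i}',m_{e^{i+1}})$ for $i<N$, and $(m_{e^N}',t)$. Let $x$ be the rail path $s\to m_{e^1}\to m_{e^1}'\to m_{e^2}\to\dots\to m_{e^N}'\to t$; it is simple and it passes through $e_M$ for every $e\in E$. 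Put $\hat c_{e_R}=0$, $\hat c_{e_L}=\rho(b_e-a_e)$, $\hat c_{e_M}=\rho a_e$ for a fixed $\rho>0$, and $\hat c=B$ on every rail edge for a large constant $B$. A short case analysis shows that any $s$--$t$ path using a rail edge has value at least $B$ both in $f_1$, $f_2$ and under $c(x,\la)^t(\cdot)$, hence for $B$ large it is dominated and never selected in the inner minimisation, whereas every other $s$--$t$ path is ``clean'', i.e.\ it traverses $e_L,e_M,e_R$ consecutively exactly for the edges of a unique $s$--$t$ path $\bar y$ of $G$, and for it $c(x,\la)^t y=\rho\big(2\la\,a^t\bar y+(1-\la)\,b^t\bar y\big)$. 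Thus $reg(x,\la)$ equals an affine function of $\la$ minus $\rho\min_{\bar y}\big(2\la\,a^t\bar y+(1-\la)\,b^t\bar y\big)$.

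It remains to count. As $\la$ runs over $[0,1]$ the weight pair $(2\la,1-\la)$ realises every nonnegative ratio, so $\min_{\bar y}\big(2\la\,a^t\bar y+(1-\la)\,b^t\bar y\big)$ visits all extreme supported efficient solutions of the given bicriteria problem, contributing one linear segment of $reg(x,\cdot)$ per element of $\mathcal E$; choosing the instance so that these values are pairwise distinct (a positive perturbation within the equivalence class if necessary) makes the resulting $|\mathcal E|-1$ interior breakpoints simple and interior to $[0,1]$. To supply the missing changepoint I would add one further parallel edge $(s,t)$, not on $x$, with a large nominal cost $M$: its cost at parameter $\la$ is $(1-\la)M$, which for $M$ large lies above every clean segment for small $\la$ but drops below them near $\la=1$ --- this is where $a_e>0$ enters, as it forces $\min_{\bar y}a^t\bar y>0$ and keeps the clean segments bounded away from $0$ as $\la\to1$; so this edge becomes one extra extreme efficient solution, active on a final subinterval $(\la^*,1]$, and adds exactly one changepoint, giving $|\Lab(x)|=|\mathcal E|$.

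The only genuinely delicate point, beyond routine arithmetic, is the dominance argument: one must check that in the augmented graph the regret minimiser is always a clean path or the single added edge, so that neither the rail nor the extra edge introduces spurious efficient solutions. Granting that, plus the easy verification that all $|\mathcal E|$ breakpoints can be made simple and interior, completes the proof.
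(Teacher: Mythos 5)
Your construction is essentially the paper's own proof: the same subdivision of every original arc into three sub-arcs carrying a split of $(a_e,b_e)$, the same big-$M$ spine path taken as $x$, and the same observation that the inner minimization then becomes a parametric weighted-sum shortest path problem over the original graph, so each extreme efficient path of $G$ contributes a segment of $reg(x,\cdot)$. The remaining differences are cosmetic: the paper normalizes to $2a_e\ge b_e$ and places $b_e/2$ on the middle arc (which also removes your unhandled $b_e=0$ corner case, since that normalization only needs $a_e>0$), and it does not add your extra parallel $(s,t)$ edge, instead counting changepoints loosely rather than patching the off-by-one between linear pieces and breakpoints as you do.
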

\begin{proof}
Let an instance of the bicriteria shortest path problem be given, i.e., a directed graph $G=(V,E)$ with arc costs $c_e = (a_e,b_e)$ for all $e\in E$. As $a_e > 0$ for all $e\in E$, we can assume w.l.o.g. that $2a_e \ge b_e$ for all $e\in E$.
We create the following instance of (C). 

Every arc $e=(i,j)\in E$ is substituted by three arcs $e'=(i,i'(e))$, $e''=(i'(e),j'(e))$ and $e'''=(j'(e),j)$. We set $\hat{c}_{e'} = a_e - \frac{b_e}{2}$, $\hat{c}_{e''} = \frac{b_e}{2}$ and $\hat{c}_{e'''} = 0$ (see Figure~\ref{fignp} for an example of such a transformation). Let $E'$, $E''$ and $E'''$ contain all edges of the respective type. Additionally, we choose an arbitrary order of edges $(e_1,\ldots,e_m)$, and create arcs $E_M = \{ (s,i'(e_1)), (j'(e_1),i'(e_2)), \ldots, (j'(e_m),t)\}$. We set costs of these arcs to be a sufficiently large constant $M$. Finally, let $x$ be the path that follows all edges in $E_M$ and $E''$. Note that edges in $E'''$ can be contracted, but are shown for better readability.

\begin{figure}[htbp]
 \captionsetup[subfigure]{justification=centering}
    \centering
    \subfloat[Bicriteria shortest path instance.]{\includegraphics[width=0.6\textwidth]{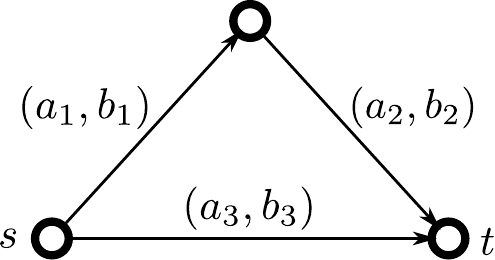}}\\
\subfloat[Compromise shortest path instance. Dashed lines indicate $x$.]{\includegraphics[width=\textwidth]{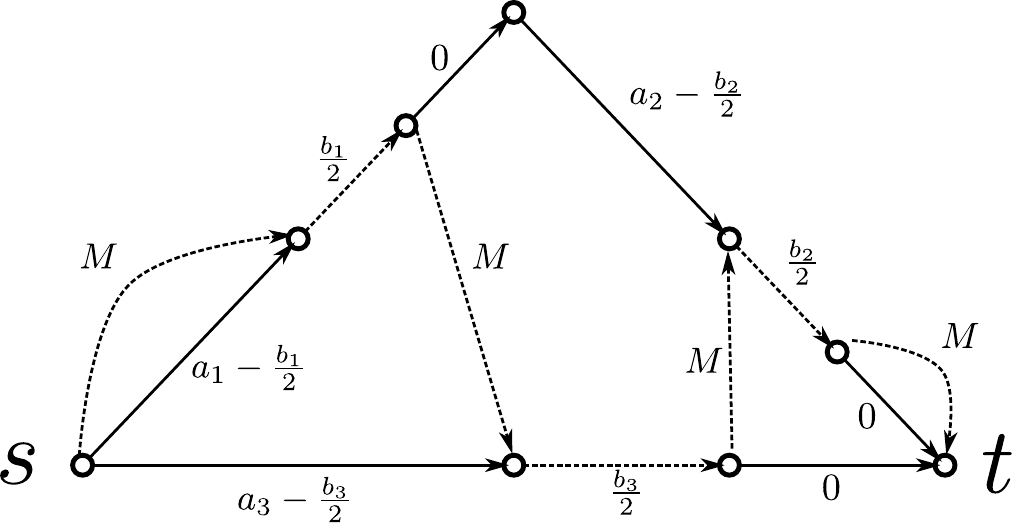}}
\caption{Example for transformation.}\label{fignp}
\end{figure}

Note that $M$ is sufficiently large so that no regret path $y$ will use any edge in $E_M$. Hence, if $y$ uses an edge in $E'$, it will also have to use the following edges in  $E''$ and $E'''$, i.e., $y$ corresponds to a path in the original graph $G=(V,E)$.
The regret of $x$ is
\begin{align*}
reg(x,\la) &= \sum_{e\in E_M} (1+\la)M + \sum_{e\in E''} (1+\la)\hat{c}_e - \min_{y\in\X} \sum_{e\in E\atop x_e = 1} (1+\la)\hat{c}_e y_e + \sum_{e\in E\atop x_e = 0} (1-\la)\hat{c}_e y_e \\
&= (1+\la)\cdot const. - \min_{y\in\X} \sum_{e\in E} \left( (1+\la)\frac{b_e}{2} + (1-\la)(a_e - \frac{b_e}{2})\right) y_e\\
&= (1+\la)\cdot const. - \min_{y\in\X} \sum_{e\in E} \left( a_e +\la (b_e - a_e) \right) y_e
\end{align*}
Therefore, if $\la$ goes from $0$ to $1$, all extreme efficient paths in the original graph $G$ are used to calculate $reg(x,\la)$. 
\end{proof}

We now consider the complexity of finding a solution $x\in\X$ that minimizes $val(x)$. Note that the reduction in \cite{averbakh2004interval} uses interval costs of the form $[0,1]$ and $[1,1]$, which does not fit into our cost framework $[(1-\la)\hat{c}_e,(1+\la)\hat{c}_e]$. Instead, we make use of the following result:

\begin{theorem}{\cite{variable}}
The min-max regret shortest path problem is NP-hard for layered graphs with interval costs $[0,1]$.
\end{theorem}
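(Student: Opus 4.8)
The statement to prove is a reduction result, so the plan is to give a polynomial-time reduction from a known NP-complete problem to the min-max regret shortest path problem on a layered graph all of whose arcs carry the uncertainty interval $[0,1]$.

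The first step is to get a workable formula for the objective. In a layered graph all $s$-$t$ paths have the same length $\ell$ (the number of layers), and by the standard worst-case characterization for interval min-max regret --- the scenario maximizing $reg(x)$ for a fixed path $x$ puts every arc of $x$ at its upper endpoint and every other arc at its lower endpoint (see \cite{averbakh2004interval,Aissi2009}) --- that scenario assigns cost $1$ to the arcs of $x$ and cost $0$ to all remaining arcs. Hence
\[ reg(x) = \ell - \min_{y\in\X} |\{ e : x_e = 1,\ y_e = 1 \}| . \]
So minimizing $reg(x)$ is the same as choosing an $s$-$t$ path $x$ that maximizes, over the best competing path $y$, the number of arcs it shares with $y$; equivalently, a path that is as expensive as possible to \emph{bypass}. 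The whole difficulty is now combinatorial, since there are no numbers left to exploit, only the graph topology.

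Second, I would fix the source problem --- 3-SAT is the natural first choice --- and build a layered instance as a chain of variable gadgets followed by a chain of clause gadgets, with every arc keeping interval $[0,1]$. Inside a variable gadget the decision maker's path commits to a truth value; these commitments are propagated forward so that a competing path $y$ that tries to deviate from $x$ inside a clause gadget can do so cheaply only when that clause is not satisfied by $x$'s commitments, and must otherwise rejoin $x$ and share all of the gadget's arcs. The target of the design is a threshold $r^\ast$ depending only on the instance size such that the canonical path of a satisfying assignment has $reg(x) = r^\ast$, while every path corresponding to a non-satisfying assignment --- and, when no satisfying assignment exists, every path whatsoever --- has $reg(x) \ge r^\ast + 1$; then satisfiability is equivalent to the min-max regret optimum being at most $r^\ast$.

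Third comes the correctness argument in both directions: from a satisfying assignment, exhibit $x$ and bound the overlap of every competing $y$ from below; and from a path $x$ with $reg(x) \le r^\ast$, read off a satisfying assignment, which forces one to show that such an $x$ must respect the gadget semantics, since any inconsistency would hand the competing path an extra cheap deviation. Finally one checks that the construction has polynomial size, is genuinely layered, and uses only $[0,1]$ intervals. I expect the gadget engineering to be the main obstacle: forcing a gap of exactly one between yes- and no-instances using topology alone, in particular padding the gadgets so that an illegitimate deviation of the competing path is never cheaper than a legitimate one, and preventing the decision maker from choosing a path that sidesteps the intended assignment semantics. As an alternative to a from-scratch construction one could start from the known NP-hardness of min-max regret shortest path on general graphs \cite{averbakh2004interval}, first subdividing arcs to make the graph layered and then replacing each fixed-cost $[1,1]$ arc by a single $[0,1]$ arc, which behaves identically in the worst-case scenario of any path that uses it; there the obstacle is the dual one, namely showing that the extra routing freedom the competing path gains through the former $[1,1]$ arcs leaves the set of optimal solutions unchanged.
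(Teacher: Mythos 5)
There is a genuine gap here, and it is worth noting first that the paper itself does not prove this statement at all: it is quoted from \cite{variable}, precisely because the classical reduction of \cite{averbakh2004interval} (which mixes $[0,1]$ and $[1,1]$ intervals) does not fit the framework $[(1-\la)\hat{c}_e,(1+\la)\hat{c}_e]$. Your opening reformulation is fine: with all intervals $[0,1]$ and all paths of common length $\ell$ in a layered graph, the worst-case scenario argument gives $reg(x)=\ell-\min_{y\in\X}|\{e: x_e=1,\ y_e=1\}|$, which matches the paper's own observation that the regret reduces to $\max_{y\in\X}\sum_{e: x_e=0}y_e$. But from that point on you have only a plan, not a proof. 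The entire content of the theorem is the gadget construction you defer: you never specify the variable and clause gadgets, the threshold $r^\ast$, or the two-direction correctness argument, and you yourself flag "gadget engineering" as the main obstacle. Purely topological reductions of this kind (no numerical costs left, only overlap counts in a layered graph) are exactly where such constructions tend to fail or require real ingenuity, so asserting that suitable gadgets exist is not a step one can wave through.

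Your fallback route has a concrete flaw as stated. Replacing each $[1,1]$ arc of the Averbakh--Lebedev instance by a $[0,1]$ arc does \emph{not} behave identically: the equivalence you invoke holds only for arcs used by $x$, whereas in the worst-case scenario for a path $x$ that avoids such an arc its cost drops from $1$ to $0$, so the competing path $y$ now traverses it for free. This changes $opt(c)$ in the relevant scenarios, hence the regret values and potentially the set of optimal solutions --- which is the very reason the present paper declines to reuse that reduction and cites \cite{variable} instead. You name this obstacle ("the extra routing freedom the competing path gains") but do not resolve it, and resolving it (e.g.\ by padding or duplicating structure so that the formerly fixed arcs are never attractive shortcuts for $y$, while keeping the graph layered) is again the substance of the proof. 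As it stands, the proposal identifies the right normal form of the problem but leaves the hardness argument itself unproved.
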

Note that for layered graphs, all paths have the same cardinality. Hence, $reg(x) = (b-a) \max_{y\in\X} \sum_{e\in E \atop x_e = 0} y_e$ (see Section~\ref{sec:mst}), and the problem with costs $[0,1]$ is equivalent to the problem with costs $[a,b]$ for any $a<b$. Analog to the last section, we can therefore conclude:

\begin{theorem}
Finding an optimal solution to the compromise shortest path problem is NP-hard on layered graphs, even if $w(\la)=1$ for all $\la\in[0,1]$.
\end{theorem}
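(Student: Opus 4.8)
The plan is to mirror exactly the argument used for the minimum spanning tree in Section~\ref{sec:mst}, reducing from the min-max regret shortest path problem on layered graphs with interval costs $[0,1]$, which is NP-hard by the cited result of \cite{variable}. So I would start from an arbitrary such instance: a layered graph $G=(V,E)$ with a source $s$, a sink $t$, and every arc carrying the cost interval $[0,1]$. From it I would build an instance of (C) on the \emph{same} graph $G$, with nominal costs $\hat c_e = 1$ for every $e\in E$ and the uniform weight $w(\la)=1$ on $\La=[0,1]$; the feasible set $\X$ is again the set of simple $s$--$t$ paths.

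Next I would invoke the layered-graph identity recalled just before the statement. Since in a layered graph all $s$--$t$ paths have the same number of arcs, and since for a fixed $\la$ all arcs in the instance share the common interval $[1-\la,\,1+\la]$, the regret of any path $x$ takes the form
\[ reg(x,\la) \;=\; \big((1+\la)-(1-\la)\big)\,\max_{y\in\X}\sum_{e\in E\,:\,x_e=0} y_e \;=\; 2\la\,\max_{y\in\X}\sum_{e\in E\,:\,x_e=0} y_e, \]
because the constant terms coming from the upper interval endpoints cancel (the derivation is the same three-line computation as in Section~\ref{sec:mst}). Integrating over $\la$ then gives
\[ val(x) \;=\; \int_0^1 reg(x,\la)\,d\la \;=\; \Big(\int_0^1 2\la\,d\la\Big)\max_{y\in\X}\sum_{e\in E\,:\,x_e=0} y_e \;=\; \max_{y\in\X}\sum_{e\in E\,:\,x_e=0} y_e. \]
The right-hand side is, up to the constant factor $b-a=1$, precisely the min-max regret objective of the original $[0,1]$-cost instance. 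Hence a path $x$ minimizes $val(x)$ if and only if it is optimal for the min-max regret shortest path problem with costs $[0,1]$, and the NP-hardness transfers; the transformation is clearly polynomial. I would close by noting that this also holds for the stated special case $w\equiv1$, which is exactly what was used.

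There is really no serious obstacle here — it is a routine adaptation. The only point that deserves care is checking that the layered-graph identity genuinely applies to the constructed instance: one must confirm that the instance keeps $G$ layered (so all $s$--$t$ paths have equal cardinality and the additive constants cancel) and that, for each fixed $\la$, every arc has the identical interval $[1-\la,1+\la]$, so that the equivalence between $[0,1]$-costs and $[a,b]$-costs from Section~\ref{sec:mst} can be used verbatim. Once that bookkeeping is in place, the chain of equalities above and the conclusion follow immediately.
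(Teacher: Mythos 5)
Your proposal is correct and matches the paper's own argument: the paper likewise reduces from the NP-hard min-max regret shortest path problem on layered graphs with $[0,1]$ intervals, sets $\hat{c}_e=1$ with $w(\la)=1$ on the same graph, uses the equal-cardinality cancellation from Section~\ref{sec:mst} to get $reg(x,\la)=2\la\max_{y\in\X}\sum_{e: x_e=0}y_e$, and integrates to obtain $val(x)=\max_{y\in\X}\sum_{e: x_e=0}y_e$. Nothing further is needed.
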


\section{Experiments}\label{sec:exp}

\subsection{Setup}

In this section we present two experiments on compromise solutions to min-max regret problems with variable-sized uncertainty. The first experiment is concerned with the computational effort to find such a solution using the iterative algorithms presented in Section~\ref{sec:algo}. In the second experiment we compare these solutions to the alternatives of using classic regret solutions for various uncertainty set sizes.

Both experiments are conducted on shortest path instances of two types. 

\medskip

The first type consists of complete layered graphs. We parameterize such instances by the number of layers, width and cost types. Each graph consists of a source node, a sink node, and node layers of equal width between. For $N+1$ layers of width $k$, an instance has a total of $(N+1)k+2$ nodes and $Nk^2+2k$ edges. We use $N=5$ to $N=55$ in steps of size 5, and $k=5,10,15,20$. Graph sizes thus vary from 32 nodes and 130 edges to 1,122 nodes and 22,040 edges.

Edges connect all nodes of one layer to the nodes of the next layer. Source and sink are completely connected to the first and last layer, respectively. We considered two types of cost structures to generate $\hat{c}$. For type A, all costs are chosen uniformly from the interval $[1,100]$. For type B costs, we generate nominal costs in $[1,30]\cup[70,100]$, i.e., they are either low or high.

In total, there are $11\cdot 4 \cdot 2 = 88$ parameter combinations. For each combination, we generate 20 instances, i.e., a total of 1,760 instances. 

\medskip

The second type consists of graphs with two paths, that are linked by diagonal edges. For some length parameter $L$, we generated two separate paths from a node $s$ to a node $t$, each with $L$ nodes between. We then generate diagonal edges in the following way. On one of the two paths, we choose the $i$th node uniformly randomly. We then connect this node with the $j$th node on the other path, where $j>i$. The $j$th node is chosen with probability $\frac{3}{4}(\frac{1}{4})^{j-i-1}$, i.e., long diagonal edges are unlikely (ensuring that $j$ is at most $L$).

Edges along the two base paths have length chosen uniformly from the interval $[1,100]$. For diagonal edges, we determine their length by sampling from the same interval $(j-i)$ times, and adding these values, i.e.,  all paths have the same expected length.

We generate instances with length $L$ from 50 to 850 in steps of 100, and set the number of diagonal edges to be $d\cdot L$ for $d\in\{0.05,0.10,0.15\}$. The smallest instances therefore contain 102 nodes and 105 edges; the largest instances contain 1,702 nodes and  1,830 edges. For each parameter combination, we generate 20 instances, i.e., a total of $9\cdot 3 \cdot 20 = 540$ instances.

\medskip

The classic min-max regret shortest path problem on instances of both types is known to be NP-hard, see \cite{andre-diss}. We investigate both types, as we expect the nominal solution to show a different performance: For layered graphs, the nominal solution is also optimal for $\cU(1)$, as for every path there also exists a disjoint path. Therefore, the regret of a path $P$ with respect to  $\cU(1)$ is $\sum_{e\in P} 2\hat{c}_e$. For the second type of instances, a good solution with respect to min-max regret can be expected to intersect with as many other paths as possible. We can therefore expect the nominal solution to be different to the optimal solution of $\cU(1)$.

All experiments were conducted using one core of a computer with an Intel Xeon E5-2670 processor, running at 2.60 GHz with 20MB cache, with Ubuntu 12.04 and Cplex v.12.6.

\subsection{Experiment 1: Computational Effort}

\subsubsection{Layered Graphs}

We solve the compromise approach to variable-sized uncertainty for each instance using the algorithms described in Section~\ref{sec:algo} and record the computation times. Average computation times in seconds are presented in Table~\ref{tab1}. In each column we average over all instances for which this parameter is fixed; i.e., in column ''width 5'' we show the results over all 440 instances that have a width of 5, broken down into classes of different length. The results indicate that computation times are still reasonable given the complexity of the problem, and mostly depend on the size of the instance (width parameter) and the density of the graph, while the cost structure has no significant impact on computation times.

\begin{table}[htbp]
\begin{center}
\begin{tabular}{rr|rrrr|rr}
&  & \multicolumn{4}{|c|}{Width} & \multicolumn{2}{|c}{Costs} \\
&  & 5 & 10 & 15 & 20 & A & B \\
\hline
\parbox[t]{2mm}{\multirow{11}{*}{\rotatebox[origin=c]{90}{Layers}}} & 5 & 0.05 & 0.13 & 0.21 & 0.40 & 0.20 & 0.20 \\
& 10 & 0.22 & 0.49 & 0.89 & 1.43 & 0.75 & 0.77 \\
& 15 & 0.47 & 0.99 & 1.78 & 2.98 & 1.47 & 1.64 \\
& 20 & 1.17 & 2.31 & 3.61 & 6.78 & 3.45 & 3.49 \\
& 25 & 1.99 & 4.17 & 7.53 & 11.47 & 5.97 & 6.61 \\
& 30 & 3.77 & 7.86 & 13.13 & 21.14 & 11.50 & 11.45 \\
& 35 & 6.05 & 11.08 & 19.87 & 35.51 & 18.28 & 17.97 \\
& 40 & 9.46 & 21.85 & 35.37 & 49.58 & 28.64 & 29.48 \\
& 45 & 13.77 & 29.64 & 56.30 & 85.48 & 47.23 & 45.37 \\
& 50 & 21.58 & 46.37 & 67.88 & 141.14 & 66.33 & 72.16 \\
& 55 & 26.61 & 69.56 & 125.95 & 193.30 & 105.94 & 101.76
\end{tabular}
\caption{Average computation times to solve (C) in seconds.}\label{tab1}
\end{center}
\end{table}

We present more details in Tables~\ref{tab2} and \ref{tab3}, where the number of iterations (i.e., how often was the relaxation of (C) solved in Line~\ref{algret} of Algorithm~\ref{alg:c}) and the size of $\Lab$ at the end of the algorithm are presented, respectively. 

We find that the average number of iterations is stable and small, with around two iterations on average (the maximum number of iterations is three). This value seems largely independent of the problem size. For the number of generated changepoints $|\Lab|$, however, this is different. It increases with the number of layers, but it decreases with the width of the graph. Recall that the regret of a solution $x$ is roughly determined by the number of edges a regret path $y$ has in common. With increasing width, regret paths are less likely to use the same edges, which explains why the size of $\Lab(x)$ decreases. As before, we find that the cost structure does not have a significant impact on the performance of the solution algorithm.

\begin{table}[htbp]
\begin{center}
\begin{tabular}{rr|rrrr|rr}
&  & \multicolumn{4}{|c|}{Width} & \multicolumn{2}{|c}{Costs} \\
&  & 5 & 10 & 15 & 20 & A & B \\
 \hline
\parbox[t]{2mm}{\multirow{11}{*}{\rotatebox[origin=c]{90}{Layers}}} 
& 5 & 1.98 & 1.98 & 1.73 & 1.80 & 1.89 & 1.85 \\
& 10 & 2.02 & 1.98 & 1.98 & 2.00 & 1.98 & 2.01 \\
& 15 & 2.08 & 2.02 & 1.98 & 1.98 & 2.01 & 2.01 \\
& 20 & 2.08 & 2.08 & 2.00 & 2.08 & 2.06 & 2.05 \\
& 25 & 2.00 & 2.02 & 2.02 & 2.05 & 2.04 & 2.01 \\
& 30 & 2.15 & 2.10 & 2.08 & 2.05 & 2.11 & 2.08 \\
& 35 & 2.10 & 2.02 & 2.10 & 2.05 & 2.06 & 2.08 \\
& 40 & 2.10 & 2.15 & 2.10 & 2.02 & 2.11 & 2.08 \\
& 45 & 2.12 & 2.15 & 2.12 & 2.05 & 2.10 & 2.12 \\
& 50 & 2.17 & 2.08 & 2.05 & 2.15 & 2.05 & 2.17 \\
& 55 & 2.10 & 2.10 & 2.02 & 2.12 & 2.10 & 2.08
\end{tabular}
\caption{Average numbers of iterations.}\label{tab2}
\end{center}
\end{table}

\begin{table}[htbp]
\begin{center}
\begin{tabular}{rr|rrrr|rr}
&  & \multicolumn{4}{|c|}{Width} & \multicolumn{2}{|c}{Costs} \\
&  & 5 & 10 & 15 & 20 & A & B \\
 \hline
\parbox[t]{2mm}{\multirow{11}{*}{\rotatebox[origin=c]{90}{Layers}}}
& 5 & 3.05 & 2.95 & 2.17 & 2.27 & 2.59 & 2.64 \\
& 10 & 5.05 & 3.75 & 3.33 & 3.10 & 3.71 & 3.90 \\
& 15 & 6.72 & 4.95 & 4.33 & 4.10 & 4.92 & 5.12 \\
& 20 & 9.00 & 6.53 & 5.03 & 5.20 & 6.40 & 6.47 \\
& 25 & 10.47 & 7.70 & 6.65 & 5.60 & 7.42 & 7.79 \\
& 30 & 11.68 & 9.43 & 7.28 & 6.70 & 9.00 & 8.54 \\
& 35 & 13.10 & 9.32 & 7.62 & 7.10 & 8.99 & 9.59 \\
& 40 & 14.95 & 11.05 & 9.35 & 7.78 & 10.95 & 10.61 \\
& 45 & 16.10 & 11.35 & 10.35 & 8.53 & 11.85 & 11.31 \\
& 50 & 18.73 & 12.57 & 10.05 & 9.47 & 12.56 & 12.85 \\
& 55 & 19.77 & 14.40 & 11.43 & 9.62 & 13.68 & 13.94
\end{tabular}
\caption{Average size of $\Lab$ at the end of the algorithm.}\label{tab3}
\end{center}
\end{table}

\subsubsection{Two-Path Graphs}

The Tables~\ref{tab4}, \ref{tab5} and \ref{tab6} correspond to the Tables~\ref{tab1}, \ref{tab2} and \ref{tab3} from the last experiment, respectively. Computation times are sensitive to the parameter $d$, i.e., the number of diagonal edges. For small values of $d$, the computational complexity of problem (C) scales well with the length of the graph; however, for larger values of $d$, the problem becomes intractable.

\begin{table}[htbp]
\begin{center}
\begin{tabular}{rr|rrr}
& & \multicolumn{3}{|c}{$d$} \\
 & &  0.05 & 0.10 & 0.15 \\
\hline
\parbox[t]{2mm}{\multirow{8}{*}{\rotatebox[origin=c]{90}{Length}}} & 50 & 0.04 & 0.05 & 0.08 \\
& 150 & 0.13 & 0.29 & 0.67 \\
& 250 & 0.22 & 0.79 & 2.23 \\
& 350 & 0.48 & 2.07 & 11.76 \\
& 450 & 0.78 & 5.37 & 28.22 \\
& 550 & 1.33 & 12.01 & 57.44 \\
& 650 & 2.06 & 19.65 & 165.17 \\
& 750 & 3.01 & 36.70 & 488.51 \\
& 850 & 3.84 & 73.42 & 3186.18
\end{tabular}
\caption{Average computation times to solve (C) in seconds.}\label{tab4}
\end{center}
\end{table}

While the number of iterations is relatively small overall, as in the last experiment, the size of $\Lab$ increases with $d$, which makes the master problems larger and more difficult to solve.

\begin{table}[htbp]
\begin{center}
\begin{tabular}{rr|rrr}
& & \multicolumn{3}{|c}{$d$} \\
 & &  0.05 & 0.10 & 0.15 \\
\hline
\parbox[t]{2mm}{\multirow{8}{*}{\rotatebox[origin=c]{90}{Length}}}
 & 50 & 2.00 & 2.05 & 2.00 \\
 & 150 & 2.05 & 2.15 & 2.20 \\
 & 250 & 2.05 & 2.10 & 2.30 \\
 & 350 & 2.10 & 2.20 & 2.45 \\
 & 450 & 2.10 & 2.35 & 2.45 \\
 & 550 & 2.20 & 2.30 & 2.40 \\
 & 650 & 2.15 & 2.30 & 2.55 \\
 & 750 & 2.20 & 2.35 & 2.50 \\
 & 850 & 2.05 & 2.50 & 2.35
\end{tabular}
\caption{Average numbers of iterations.}\label{tab5}
\end{center}
\end{table}

\begin{table}[htbp]
\begin{center}
\begin{tabular}{rr|rrr}
& & \multicolumn{3}{|c}{$d$} \\
 & &  0.05 & 0.10 & 0.15 \\
\hline
\parbox[t]{2mm}{\multirow{8}{*}{\rotatebox[origin=c]{90}{Length}}}
 & 50 & 2.75 & 3.55 & 3.95 \\
 & 150 & 4.20 & 6.45 & 8.90 \\
 & 250 & 5.15 & 9.15 & 13.15 \\
 & 350 & 6.95 & 12.25 & 18.05 \\
 & 450 & 8.05 & 16.25 & 21.40 \\
 & 550 & 9.95 & 18.45 & 27.80 \\
 & 650 & 11.20 & 20.20 & 30.25 \\
 & 750 & 12.35 & 22.45 & 32.70 \\
 & 850 & 13.05 & 27.00 & 39.00
\end{tabular}
\caption{Average size of $\Lab$ at the end of the algorithm.}\label{tab6}
\end{center}
\end{table}

\subsection{Experiment 2: Comparison of Solutions}

\subsubsection{Layered Graphs}
\label{sec:plots}

In our second experiment, we compare the compromise solution to the nominal solution (which is also the min-max regret solution with respect to the uncertainty sets $\cU(0)$ and $\cU(1)$), and to the min-max regret solutions with respect to $\cU(0.3)$, $\cU(0.5)$ and $\cU(0.7)$.

To compare solutions, we calculate the regret of the compromise solution for values of $\la$ in $[0,1]$. We take this regret as the baseline. For all other solutions, we also calculate the regret depending on $\la$, and compute the difference to the baseline. We then compute the average differences for fixed $\la$ over all instances of the same size. The resulting average differences are shown in Figure~\ref{fig:exp} for four instance sizes. To set the differences in perspective, the average regret ranges from $\cU(0)$ to $\cU(1)$ of the compromise solutions are shown in the captions.

\begin{figure}[htbp]
   \captionsetup[subfigure]{justification=centering}
    \centering
    \subfloat[][Layers = 20, Width = 5.\\ Value range {$[0.1, 376.4]$}.]{\includegraphics[width=.5\textwidth]{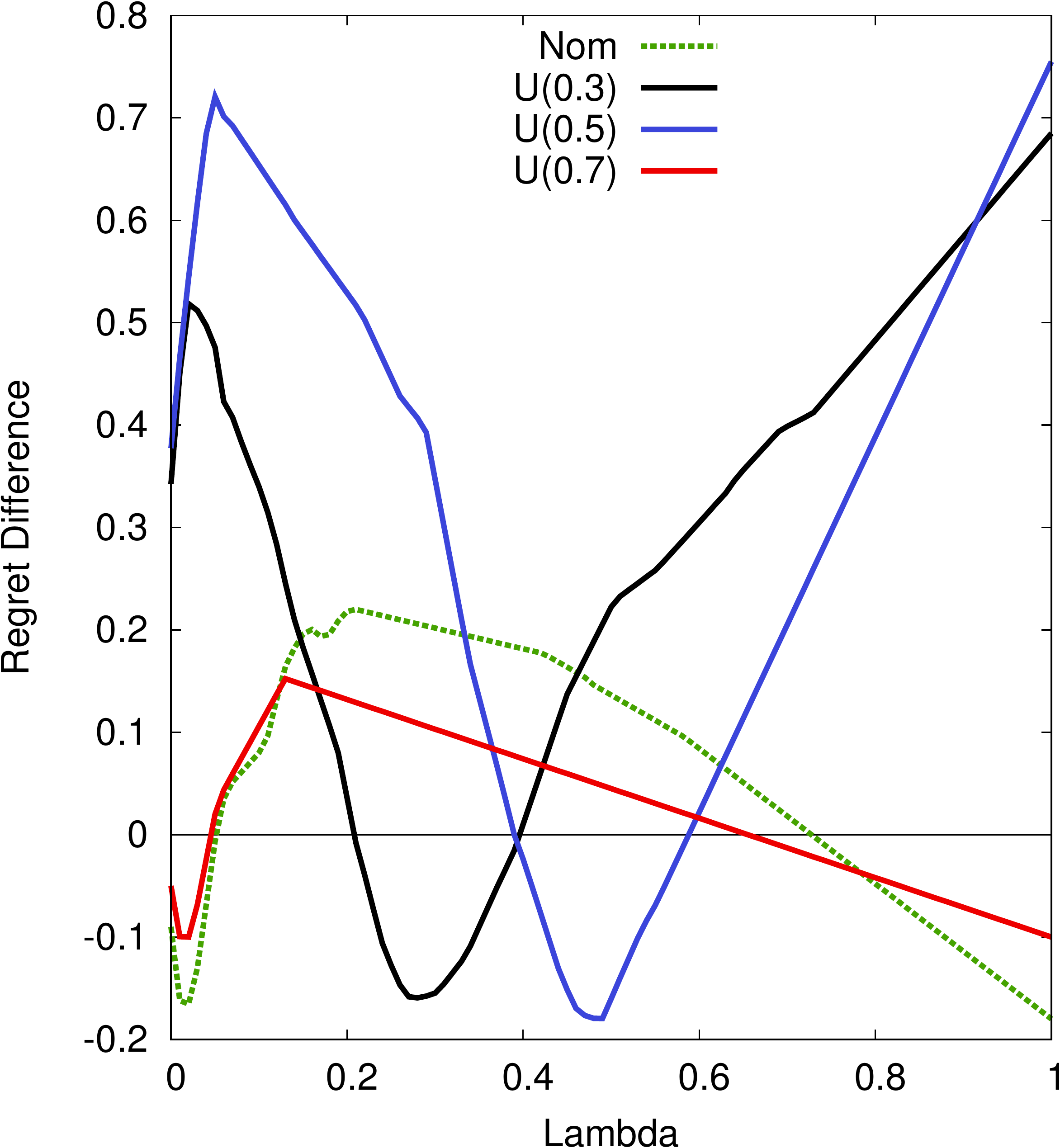}}
\subfloat[][Layers = 20, Width = 20.\\ Value range {$[0.1, 108.4]$}.]{\includegraphics[width=.5\textwidth]{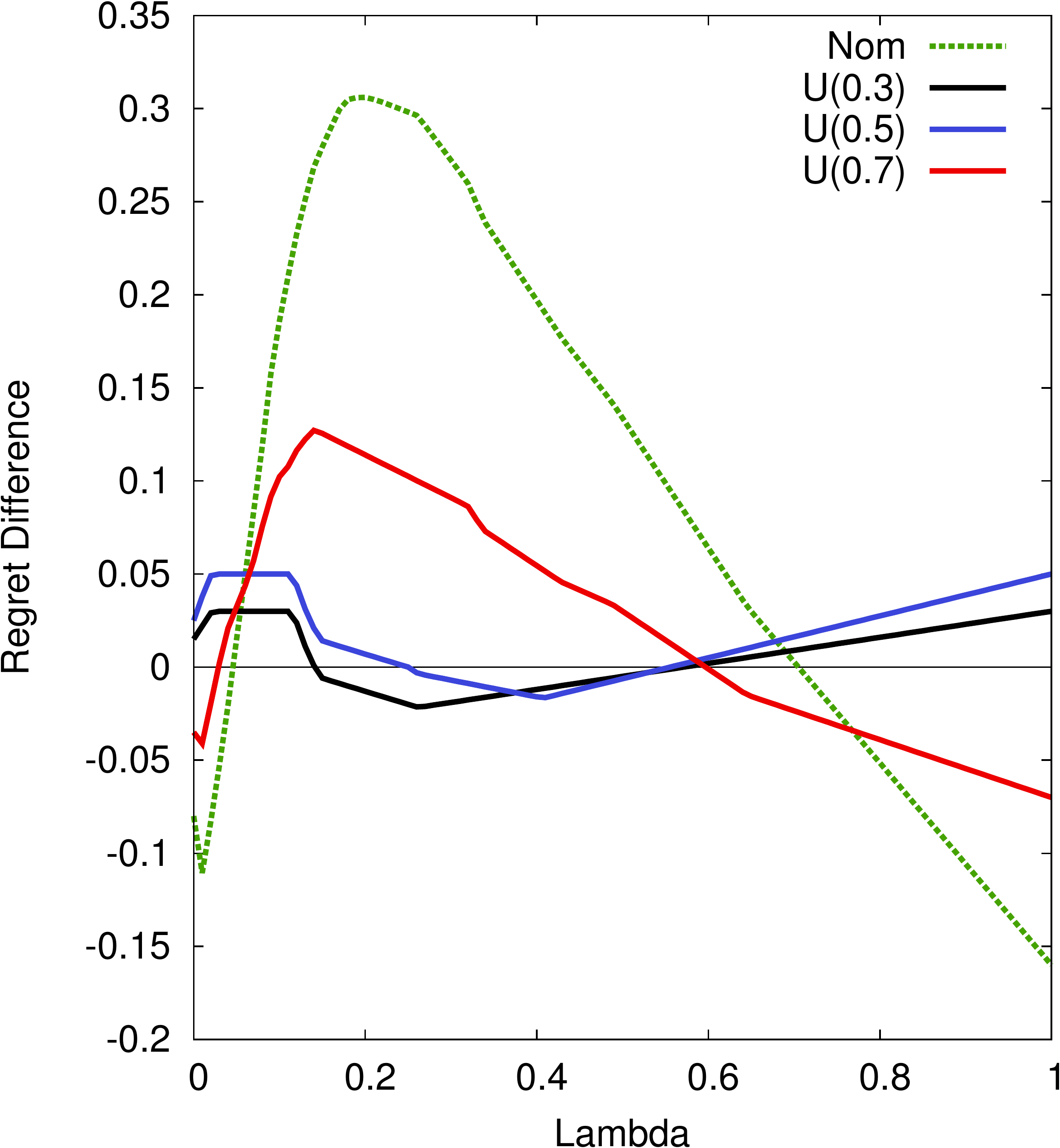}}\\
\subfloat[][Layers = 55, Width = 5.\\ Value range {$[0.5, 1055.5]$}.]{\includegraphics[width=.5\textwidth]{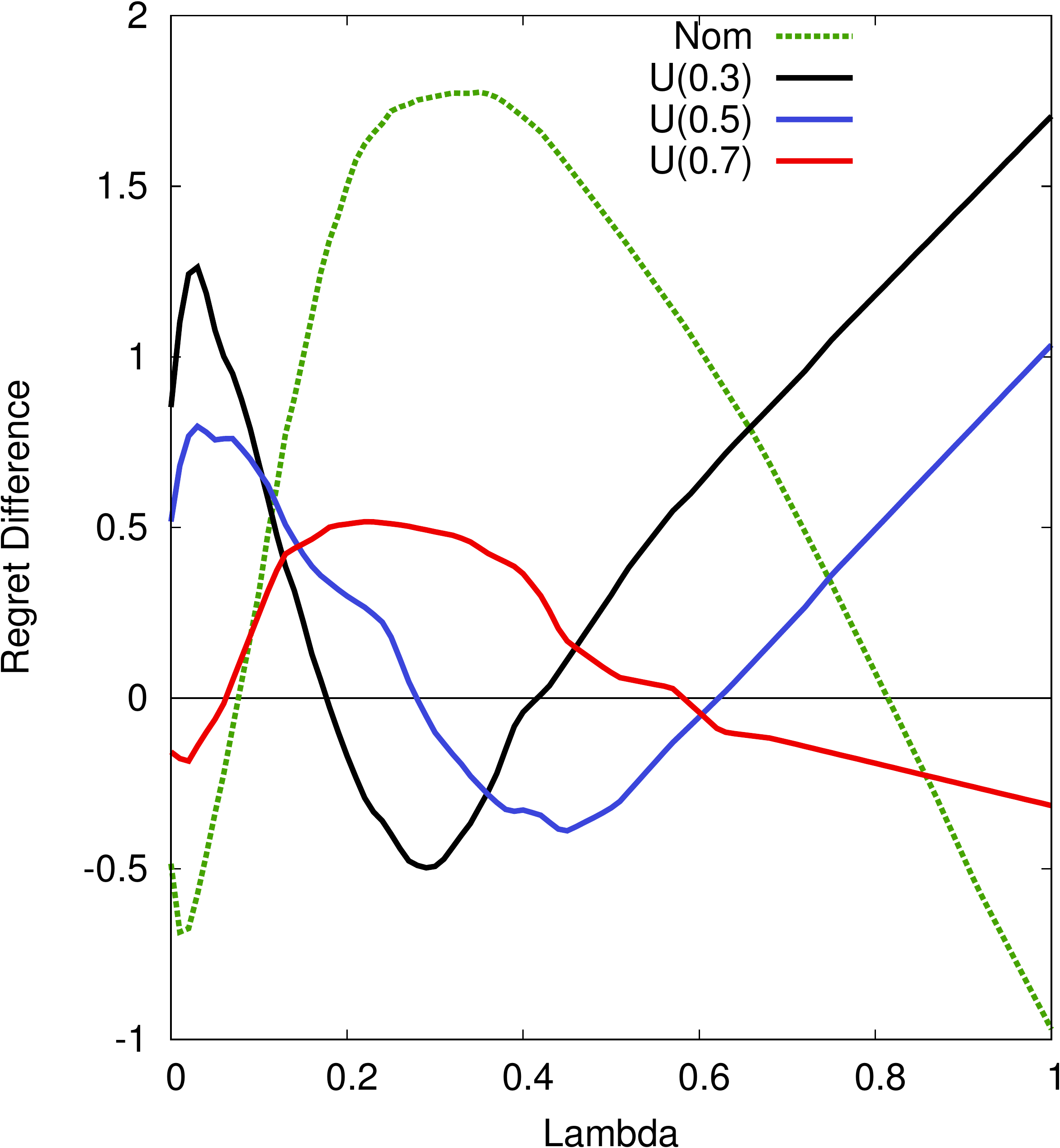}}
\subfloat[][Layers = 55, Width = 20.\\ Value range {$[0.1, 309.3]$}.]{\includegraphics[width=.5\textwidth]{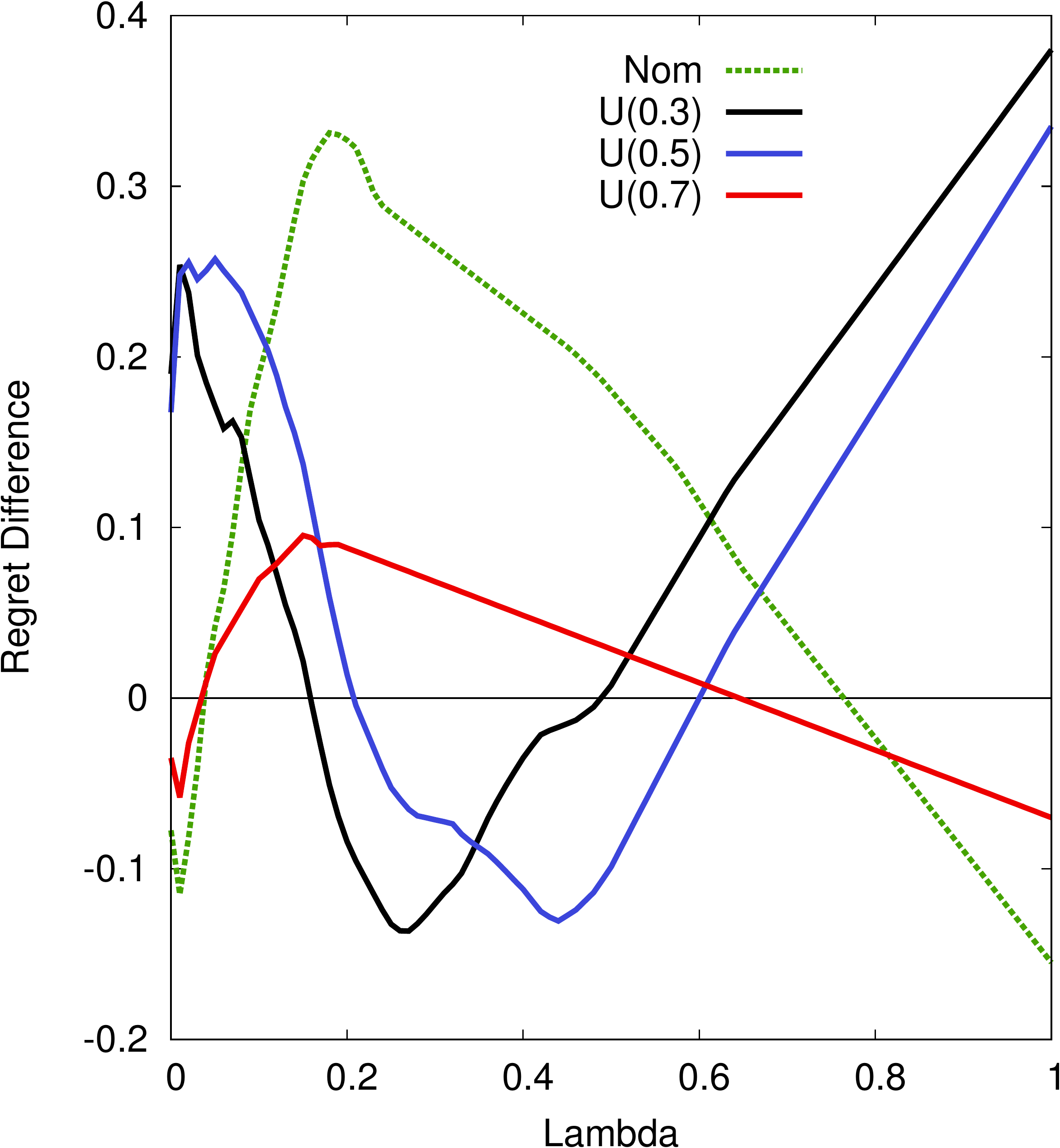}}
\caption{Difference in regret compared to nominal solution depending on $\la$.}\label{fig:exp}
\end{figure}

By construction, a min-max regret solution with respect to $\cU(\bar{\la})$ has the smallest regret for this $\bar{\la}$. Generally, all presented solutions have higher regret than the nominal solution for small and for large values of $\la$, and perform better in between. By construction, the compromise solution has the smallest integral under the shown curve. It can be seen that it presents an interesting alternative to the other solutions by having a relatively small regret for small and large values of $\la$, but also a relatively good performance in between. 

\subsubsection{Two-Path Graphs}

We generate the same plots as in Section~\ref{sec:plots} using the two-path instances. Recall that in this case, the nominal solution is not necessarily an optimal solution with respect to $\cU(1)$. We therefore include an additional line for $\cU(1)$ in Figure~\ref{fig:exp2}.

\begin{figure}[htbp]
   \captionsetup[subfigure]{justification=centering}
    \centering
 \subfloat[][Length = 150, $d=0.05$.\\ Value range {$[197.4, 10608.4]$}.]{\includegraphics[width=.5\textwidth]{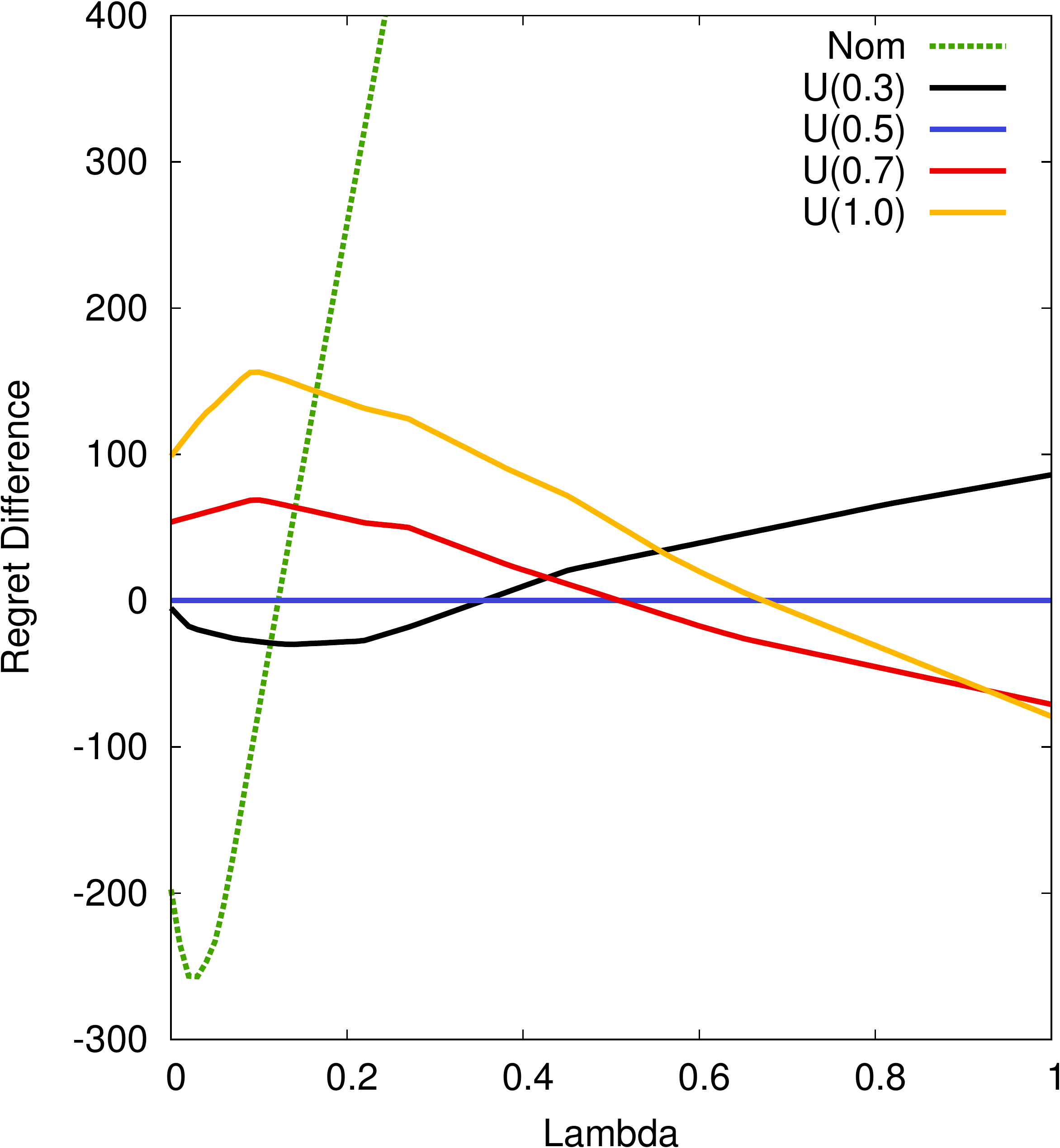}}
 \subfloat[][Length= 150, $d=0.15$.\\ Value range {$[249.1, 11493.1]$}.]{\includegraphics[width=.5\textwidth]{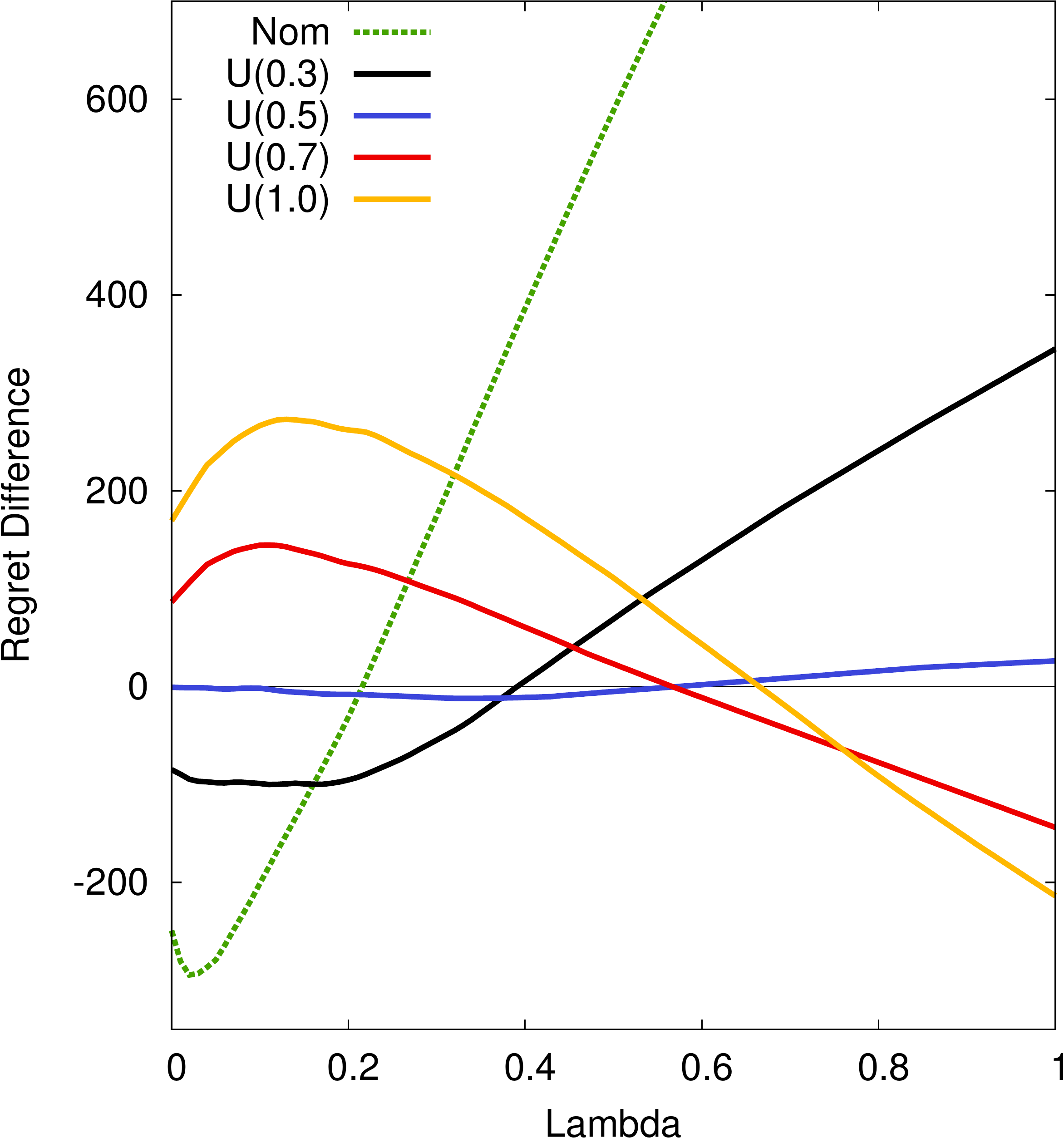}}\\
 \subfloat[][Length = 750, $d=0.05$.\\ Value range {$[1215.2, 51225.2]$}.]{\includegraphics[width=.5\textwidth]{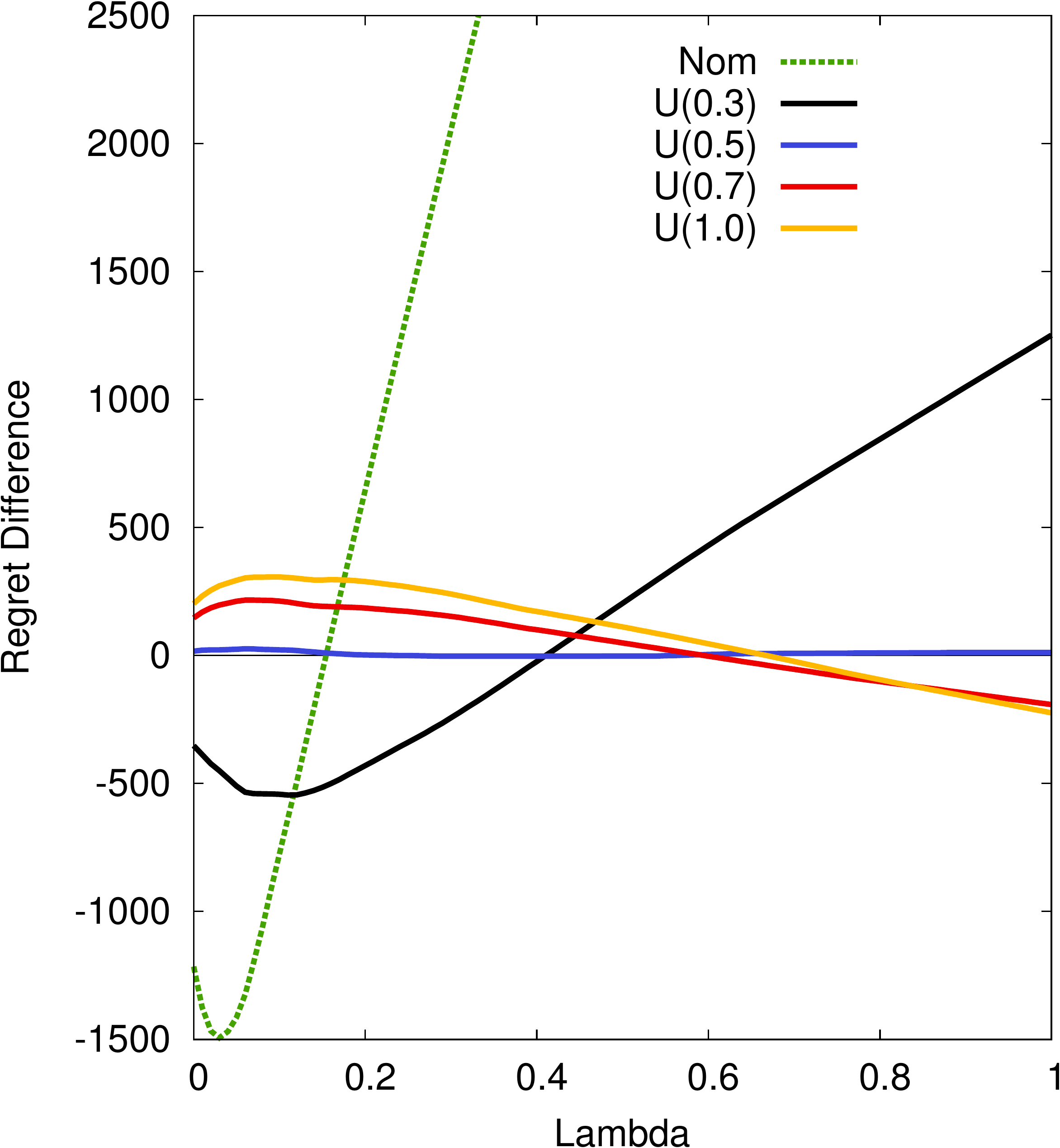}}
 \subfloat[][Length = 750, $d=0.15$.\\ Value range {$[1502.0, 54567.2]$}.]{\includegraphics[width=.5\textwidth]{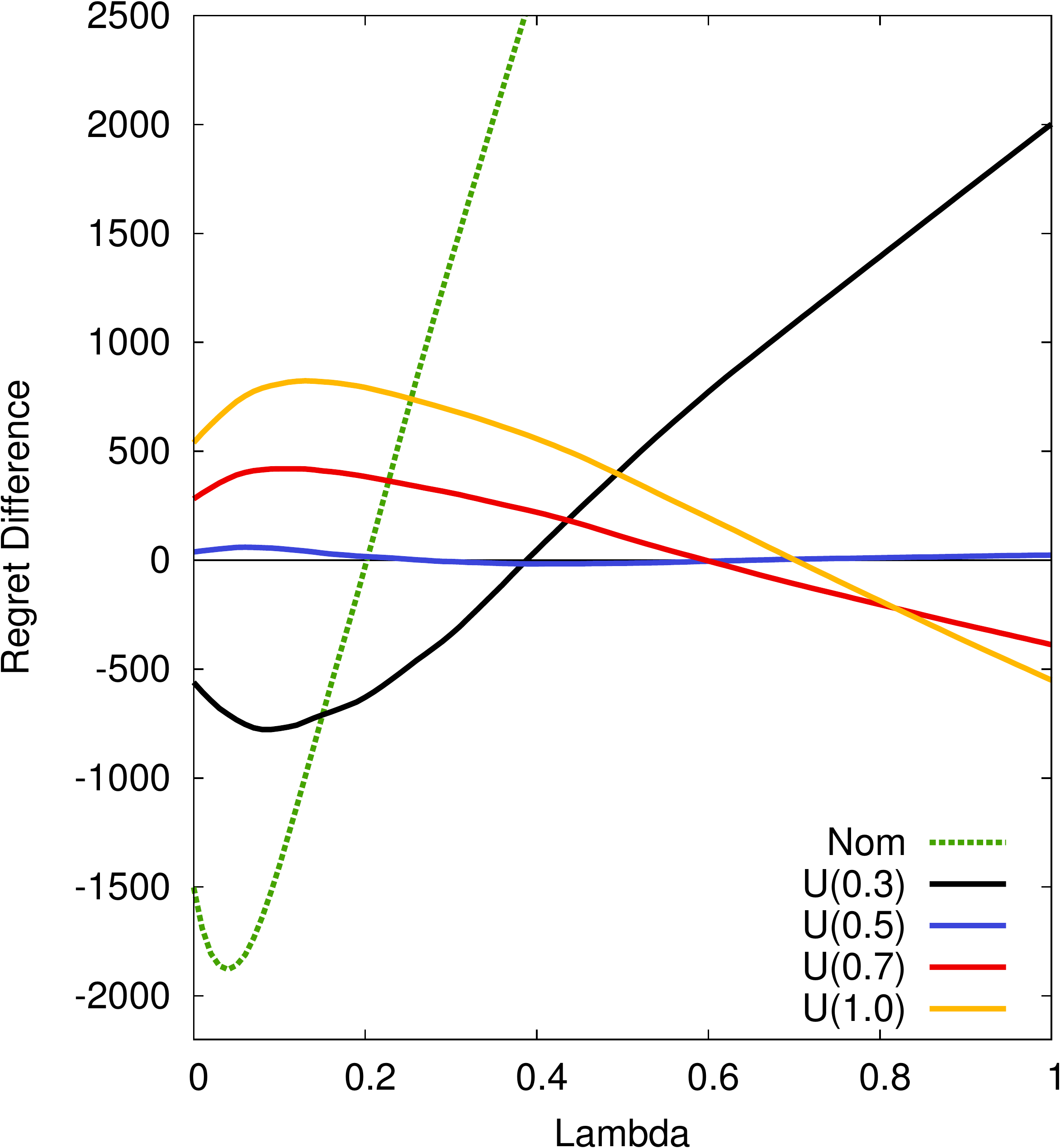}}
\caption{Difference in regret compared to nominal solution depending on $\la$.}\label{fig:exp2}
\end{figure}

It can be seen that the nominal solution performs different to the last experiment; the regret increases with $\lambda$ in a rate that part of the line needed to be cut off from the plot for better readability. The solution to $\cU(0.5)$ performs very close to the compromise solution overall. Additionally, the scale of the plots show that differences in regret are much larger than in the previous experiment. Overall, it can be seen that using a robust solution plays a more significant role than in the previous experiment, as the nominal solution shows poor performance. The solutions that hedge against large uncertainty sets ($\cU(0.7)$ and $\cU(1.0)$) are relatively expensive for small uncertainty sets and vice versa. The compromise solution (as $\cU(0.5)$, in this case) presents a reasonable trade-off over all uncertainty sizes.

\section{Conclusion}\label{sec:conc}

Classic robust optimization approaches assume that the uncertainty set $\cU$ is part of the input, i.e., it is produced using some expert knowledge in a previous step. If the modeler has access to a large set of data, it is possible to follow recently developed data-driven approaches to design a suitable set $\cU$. In our approach, we remove the necessity of defining $\cU$ by using a single nominal scenario, and considering all uncertainty sets generated by deviating coefficients of different size simultaneously. The aim of the compromise approach is to find a single solution that performs well on average in the robust sense over all possible uncertainty set sizes.

For min-max combinatorial problems, we showed that our approach can be reduced to solving a classic robust problem of particular size. The setting is more involved for min-max regret problems, where the regret objective is a piecewise  linear function in the uncertainty size. We presented a general solution algorithm for this problem, which is based on a reduced master problem, and the iterative solution of subproblems of nominal structure.

For specific problems, positive and negative complexity results were demonstrated. The compromise selection problem can be solved in polynomial time. Solutions to the compromise minimum spanning tree problem can be evaluated in polynomial time, but it is NP-hard to find an optimal solution.
For compromise shortest path problems, the same results hold in case of layered graphs; however, for general graphs, it is still an open problem if there exist instances where exponentially many regret solutions are involved in the evaluation problem.

In computational experiments we highlighted the value of our approach in comparison with different min-max regret solutions, and showed that computation times can be within few minutes for instances with up to 22,000 edges.


\begin{thebibliography}{BTGN09}

\bibitem[ABV09]{Aissi2009}
H.~Aissi, C.~Bazgan, and D.~Vanderpooten.
\newblock Min–max and min–max regret versions of combinatorial optimization
  problems: A survey.
\newblock {\em European Journal of Operational Research}, 197(2):427 -- 438,
  2009.

\bibitem[AL04]{averbakh2004interval}
I.~Averbakh and V.~Lebedev.
\newblock Interval data minmax regret network optimization problems.
\newblock {\em Discrete Applied Mathematics}, 138(3):289--301, 2004.

\bibitem[BB09]{bertsimas2009constructing}
D.~Bertsimas and D.~B. Brown.
\newblock Constructing uncertainty sets for robust linear optimization.
\newblock {\em Operations research}, 57(6):1483--1495, 2009.

\bibitem[BBC11]{bertsimas-survey}
D.~Bertsimas, D.~Brown, and C.~Caramanis.
\newblock Theory and applications of robust optimization.
\newblock {\em SIAM Review}, 53(3):464--501, 2011.

\bibitem[BGK13]{bertsimas2013data}
D.~Bertsimas, V.~Gupta, and N.~Kallus.
\newblock Data-driven robust optimization.
\newblock {\em arXiv preprint arXiv:1401.0212}, 2013.

\bibitem[BTGN09]{RObook}
A.~Ben-Tal, L.~El Ghaoui, and A.~Nemirovski.
\newblock {\em Robust Optimization}.
\newblock Princeton University Press, Princeton and Oxford, 2009.

\bibitem[BTN99]{ben1999robust}
A.~Ben-Tal and A.~Nemirovski.
\newblock Robust solutions of uncertain linear programs.
\newblock {\em Operations Research Letters}, 25(1):1--13, 1999.

\bibitem[Car83]{carstensen1983complexity}
P.~J. Carstensen.
\newblock {\em The complexity of some problems in parametric linear and
  combinatorial programming}.
\newblock University of Michigan, 1983.

\bibitem[CG16a]{chassein2016min}
A.~Chassein and M.~Goerigk.
\newblock Min-max regret problems with ellipsoidal uncertainty sets.
\newblock {\em arXiv preprint arXiv:1606.01180}, 2016.

\bibitem[CG16b]{variable}
A.~Chassein and M.~Goerigk.
\newblock Variable-sized uncertainty and inverse problems in robust
  optimization.
\newblock Technical report, arXiv:1606.07380, 2016.

\bibitem[Cha16]{andre-diss}
A.~Chassein.
\newblock {\em Robust Optimization: Complexity and Solution Methods}.
\newblock PhD thesis, TU Kaiserslautern, 2016.

\bibitem[Con04]{conde2004improved}
E.~Conde.
\newblock An improved algorithm for selecting p items with uncertain returns
  according to the minmax-regret criterion.
\newblock {\em Mathematical Programming}, 100(2):345--353, 2004.

\bibitem[Ehr06]{ehrgott2006multicriteria}
Matthias Ehrgott.
\newblock {\em Multicriteria optimization}.
\newblock Springer Science \& Business Media, 2006.

\bibitem[GS10]{goh2010distributionally}
J.~Goh and M.~Sim.
\newblock Distributionally robust optimization and its tractable
  approximations.
\newblock {\em Operations research}, 58(4-part-1):902--917, 2010.

\bibitem[GS16]{GoeSchoe13-AE}
M.~Goerigk and A.~Sch{\"o}bel.
\newblock Algorithm engineering in robust optimization.
\newblock In L.~Kliemann and P.~Sanders, editors, {\em Algorithm Engineering:
  Selected Results and Surveys}, volume 9220 of {\em LNCS State of the Art}.
  Springer, 2016.
\newblock Final Volume for DFG Priority Program 1307.

\bibitem[KZ11]{kasperski2011approximability}
A.~Kasperski and P.~Zieli{\'n}ski.
\newblock On the approximability of robust spanning tree problems.
\newblock {\em Theoretical Computer Science}, 412(4):365--374, 2011.

\bibitem[KZ16]{kasperski2016robust}
A.~Kasperski and P.~Zieli{\'n}ski.
\newblock Robust discrete optimization under discrete and interval uncertainty:
  A survey.
\newblock In {\em Robustness Analysis in Decision Aiding, Optimization, and
  Analytics}, pages 113--143. Springer, 2016.

\bibitem[WKS14]{wiesemann2014distributionally}
W.~Wiesemann, D.~Kuhn, and M.~Sim.
\newblock Distributionally robust convex optimization.
\newblock {\em Operations Research}, 62(6):1358--1376, 2014.

\bibitem[YKP01]{yaman2001robust}
H.~Yaman, O.~E. Kara{\c{s}}an, and M.~{\c{C}}. P{\i}nar.
\newblock The robust spanning tree problem with interval data.
\newblock {\em Operations Research Letters}, 29(1):31--40, 2001.

\end{thebibliography}
\end{document}